\numberwithin{equation}{section}
\newtheorem{ackn}{Acknowledgments\!}
\def\k{\kappa}
\def\00{{\bf 0}}
\def\RR{\mathbb R}
\renewcommand{\d}{\mathrm{d}}
\newcommand{\D}{\Delta}
\renewcommand{\L}{\Lambda}
\newcommand{\eps}{{\varepsilon}}
\def\p{\mathbf{p}}
\def\D{\mathrm{D}}
\def\k{\mathrm{\bf k}}
\def\p{\mathrm{\bf p}}
\def\L{\mathcal{L}}
\newtheorem*{theorem*}{Theorem}
\newtheorem{theorem}{Theorem}[section]
\newtheorem{lemma}[theorem]{Lemma}
\newtheorem{proposition}[theorem]{Proposition}
\newtheorem{corollary}[theorem]{Corollary}
\newtheorem{remark}[theorem]{Remark}
\begin{document}
    \title[Rigidity of solutions to singular/degenerate semilinear critical equations]{Rigidity of solutions to \\singular/degenerate semilinear critical equations}

  \date{}

\author{Giovanni Catino, Dario D. Monticelli, Alberto Roncoroni}

\address{G. Catino, Dipartimento di Matematica, Politecnico di Milano, Piazza Leonardo da Vinci 32, 20133, Milano, Italy.}
\email{giovanni.catino@polimi.it}

\address{D. Monticelli, Dipartimento di Matematica, Politecnico di Milano, Piazza Leonardo da Vinci 32, 20133, Milano, Italy.}
\email{dario.monticelli@polimi.it}

\address{A. Roncoroni, Dipartimento di Matematica, Politecnico di Milano, Piazza Leonardo da Vinci 32, 20133, Milano, Italy.}
\email{alberto.roncoroni@polimi.it}

\begin{abstract} 
This paper deals with singular/degenerate semilinear critical equations which arise as the Euler-Lagrange equation of Caffarelli-Kohn-Nirenberg inequalities in $\mathbb{R}^d$, with $d\geq 2$. We prove several rigidity results for positive solutions, in particular we classify solutions with possibily infinite energy when the intrinsic dimension $n$ satisfies $2<n<4$.
\end{abstract}

\maketitle

\begin{center}

\noindent{\it Key Words:} semilinear critical equations, Caffarelli-Kohn-Nirenberg inequalities, qualitative properties of solutions.

\medskip

\centerline{\textbf{AMS subject classification:} 35J91, 35B33, 58J05, 35R01.}
\end{center}

\

\section{Introduction}

The study of sharp functional inequalities and their extremal functions is a central theme in analysis, with profound connections to the classification of solutions of nonlinear partial differential equations. In this context, the \emph{Caffarelli-Kohn-Nirenberg inequalities} (CKN) form a two-parameter family of interpolation inequalities that generalize the classical Sobolev one by incorporating weight singularities or degeneracies.

In their simplest form, the CKN inequalities, proved in \cite{CKN}, read as
\begin{equation}\label{CKN}
\left(\int_{\mathbb{R}^d}\frac{|u|^p}{|x|^{bp}}\, dx\right)^{\frac{2}{p}}\leq C_{a,b}\int_{\mathbb{R}^d}\frac{|\nabla u|^2}{|x|^{2a}}\, dx\, , \quad \text{ for any } u\in\mathcal{D}^{a,b}(\mathbb{R}^d)\, ,
\end{equation}
where 
$$
\mathcal{D}^{a,b}(\mathbb{R}^d)=\left\{u\in L^p(\mathbb{R}^d,|x|^{-b}\, dx)\, : \, |x|^{-a}|\nabla u|\in L^2(\mathbb{R}^d,dx)  \right\}\, .
$$
In \eqref{CKN}, the parameters $a, b \in \mathbb{R}$ satisfy the conditions 
\begin{equation}\label{a-b}
a\leq b \leq a+1 \quad \text{ for } d\geq 3 \, , \quad \text{and}\quad  a< b \leq a+1 \quad \text{ for } d=2\,. 
\end{equation}
In addition we assume
\begin{equation}\label{a_c}
a<a_c := \frac{d-2}{2}\,.
\end{equation}
Indeed, it is known that for $a = a_c$ the inequality \eqref{CKN} fails to be true (see \cite{CKN}), while the dual case $a>a_c$ is related to the case $a<a_c$ via the ``modified inversion symmetry" that can be found in \cite[Theorem 1.4]{CW} (see also \cite{DEL}). Moreover the constant $C_{a,b}$ appearing in \eqref{CKN} is the optimal one and it depends only on $a,b$ and $d$. The exponent
\begin{equation}\label{p_v1}
p:=\frac{2d}{d-2+2(b-a)}
\end{equation} 
is chosen so that the inequality \eqref{CKN} is scale-invariant.

Inequality \eqref{CKN} interpolates, for $d\geq 3$, between the Sobolev inequality ($a=b=0$) and the weighted Hardy inequalities corresponding to $b=a+1$ (see e.g.  \cite{CW, DEL} for more details). We mention that the equality cases in \eqref{a-b}. i.e. $a=b$ (i.e $p=2^\ast$) and $b=a+1$ (i.e $p=2$). If $a=b<0$, $d\geq 3$ or $b=a+1$, $d\geq 2$ then equality \eqref{CKN} is never achieved in $\mathcal{D}^{a,b}(\mathbb{R}^d)$, while it is realized if $a=b\geq0$, $d\geq 3$ (see \cite{CW} for further details).

When $a = b = 0$,  \eqref{CKN} reduces to the Sobolev inequality, whose extremal functions were classified independently in \cite{aubin}, \cite{talenti} and \cite{rodemich}. 
In particular, for $a = b = 0$, they showed that the extremal functions are radial and given by the so called {\em bubbles}. The Euler-Lagrange equation associated with the Sobolev inequality is the critical semilinear (Yamabe) equation
\begin{equation}\label{eqcritclass}
\Delta u + u |u|^{\frac{4}{d - 2}} = 0 \quad \text{in } \mathbb{R}^d\,.
\end{equation}
Classical results in \cite{GNN} and \cite{obata} showed that all {\em positive} $C^2$-solutions of \eqref{eqcritclass} are radially symmetric under a suitable decay assumption at infinity. Subsequent works removed such \emph{a priori} assumption. In particular, in \cite{CGS} the authors established a landmark Liouville-type theorem for \eqref{eqcritclass} that deduced radial symmetry and classification of positive solutions without any decay (or energy) assumption. Their approach combined a Kelvin transform with the method of moving planes, using a delicate asymptotic analysis to handle the singular behavior at infinity. This was later refined in \cite{LZ} and in \cite{CL}, where the authors obtained the rigidity via maximum principle techniques. Thus, for the classical critical equation \eqref{eqcritclass}, the classification of positive solutions was eventually achieved in full generality, showing that the only positive solutions are the bubbles up to translations and scalings.

When one introduces non-zero parameters $a,b$ in \eqref{CKN}, the nature of extremal functions and solutions to the corresponding Euler-Lagrange equations becomes markedly richer. The associated Euler-Lagrange equation is the following weighted semilinear elliptic equation
\begin{equation}\label{CKN-eq}
\mathrm{div}\left(|x|^{-2a}\nabla u\right) + |x|^{-bp} |u|^{p-2}u = 0\quad \text{in } \mathbb{R}^d, 
\end{equation}
where $p$ is given by \eqref{p_v1}.
It is well known that \eqref{CKN-eq}  admits a family of explicit radial solutions  generalizing the Sobolev bubbles (see e.g. \cite[Section 2.3]{CW}). For example, under suitable relations between $a, b, d$, any positive radial solution of \eqref{CKN-eq}  must take the form:
\begin{equation}\label{Tal_CKN}
\mathcal{U}(x):=\left(\frac{d(p-2)(a_c-a)^2}{1+a-b}\right)^{\frac{1}{p-2}}\left(1+|x|^{(p-2)(a_c-a)}\right)^{-\frac{2}{p-2}}\, ,
\end{equation}
up to scaling.  A fundamental question is whether these radial functions characterize all possible positive solutions of \eqref{CKN-eq}, or whether non-radial solutions can exist. This question is intimately connected to the phenomenon of {\em symmetry breaking} for extremal functions of \eqref{CKN}. It was discovered that, unlike the pure Sobolev case, extremal functions for  \eqref{CKN} need not always be radially symmetric. Indeed, in \cite{FS} the authors showed that for certain ranges of the parameters $a,b\in\mathbb{R}$, the extremal functions of  \eqref{CKN} are not radially symmetric. More precisely, defining an auxiliary parameter $\alpha$ and an {\em intrinsic dimension} $n$ by
\begin{equation}\label{alpha-n}
\alpha:=\frac{(1+a-b)(a_c-a)}{a_c-a+b},\quad \text{ and } \quad  n:=\dfrac{d}{1+a-b}\, ,
\end{equation}
they proved that if $d\geq 3$, $p>2$, $a,b\in\mathbb{R}$ satisfy \eqref{a-b}-\eqref{a_c} and if 
$$
\alpha> \sqrt{\dfrac{d-1}{n-1}}\, , 
$$
then extremal functions are not radially symmetric. This result provided concrete examples of symmetry breaking in weighted critical equations, a striking difference from the unweighted case. On the other hand, it left open the complementary question: in the subcritical regime (i.e. when $\alpha$ is below the threshold), are all extremal functions necessarily radial? In \cite{DEL} the authors introduced a novel nonlinear flow on a cylinder (related to fast diffusion) to prove an optimal Liouville-type theorem for positive solution of \eqref{CKN-eq}. Their result showed that for $p\in(2,2^\ast)$, $a,b\in\mathbb{R}$ satisfying \eqref{a-b}-\eqref{a_c} and
\begin{equation}\label{FS_alpha}
\alpha\leq \sqrt{\dfrac{d-1}{n-1}}\,,
\end{equation}
every positive solution of \eqref{CKN-eq} that belongs to the energy space $\mathcal D^{a,b}(\mathbb{R}^d)$ is indeed radially symmetric and given by \eqref{Tal_CKN} up to scaling. This settled the question of symmetry for extremals of \eqref{CKN} and provided a complete classification of finite energy solutions in the symmetric regime.

Despite this progress, a fundamental open problem remained: can one classify all positive solutions of \eqref{CKN-eq}  without the finite energy condition (or with a weak decay condition at infinity)? In the critical Sobolev case ($a=b=0$), as mentioned above, this was achieved in \cite{CGS,CL,LZ}. The case $a\geq 0$   has been considered in \cite{CC} by using the moving planes method as in \cite{CL}. In the case $a=b=0$, a proof based on integral estimates that can be applied also in the Riemannian setting when the Ricci curvature is non-negative, has been  recently obtained in \cite{CaMo} if $d=3$, and was extended to $d=4,5$ in \cite{CFP}.

We also mention that an analogue picture holds for the quasilinear version of  \eqref{eqcritclass}, i.e. 
$$
\Delta_p u + u |u|^{\frac{d(p-2)+2p}{d-p}} = 0 \quad \text{in } \mathbb{R}^d\,,
$$
where $1<p<d$ and $\Delta_p$ is the usual $p$-Laplace operator. In \cite{DMMS,Sci,Vet} the method of moving planes has been used to classify finite energy (positive) solutions, while in \cite{CMR,Ou,Vet_bis} the integral approach has been adapted to classify all positive solutions provided some restrictions on the dimension (see also \cite{CMR,cinesi} for the Riemannian extensions). Finally, also the subelliptic critical equation in the Heisenberg group (see \cite{CLMR,FV,Praj} and \cite{JL} for the finite energy result) and in the Sasakian setting (see \cite{CMWR}) have been considered.

%
%
%
%
%

In this paper, we tackle the classification of positive solutions to \eqref{CKN-eq} by carefully adapting the approach in \cite{DEL} in order to avoid any \emph{a priori} energy or decay conditions. Our main results are complete classifications of all positive solutions to \eqref{CKN-eq} in the symmetry regime, without assuming $u \in \mathcal D^{a,b}(\mathbb{R}^d)$ in low intrinsic dimension or under suitable decay assumptions at infinity. In particular our first result is the following

\begin{theorem}\label{teo1} Let $u\in\mathcal{D}^{a,b}_{\mathrm{loc}}(\mathbb{R}^d)$ be a positive solution of \eqref{CKN-eq}, with $p\in (2,2^*)$ and  $a,b$ satisfying \eqref{a-b}-\eqref{a_c}. If
$$
2<n<4\quad\text{and}\quad \alpha\leq \sqrt{\dfrac{d-1}{n-1}}\,,
$$
then $u(x)=\mathcal{U}(x)$, where $\mathcal{U}$ is given by \eqref{Tal_CKN}, up to scaling.
\end{theorem}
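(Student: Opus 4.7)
The plan is to adapt the nonlinear-flow approach of Dolbeault-Esteban-Loss \cite{DEL} to the case of solutions lying only in $\mathcal{D}^{a,b}_{\mathrm{loc}}(\mathbb{R}^d)$, exploiting the low intrinsic dimension hypothesis $2<n<4$ to control all boundary terms. This is in the spirit of how \cite{CaMo, CFP} handled the classical Yamabe equation in low dimension without any a priori energy or decay assumption.

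First I would apply the Emden-Fowler type change of variables $u(x)=|x|^{-(a_c-a)}v(s,\omega)$, with $s=-\log|x|$ and $\omega = x/|x|\in\mathbb{S}^{d-1}$. This converts \eqref{CKN-eq} into a semilinear elliptic equation on the cylinder $\mathcal{C}=\mathbb{R}\times\mathbb{S}^{d-1}$ of the form
$$-\alpha^2\partial_s^2 v - \Delta_\omega v + \Lambda v = v^{p-1}$$
with $\Lambda>0$ depending only on $(a,b,d)$; the anisotropy parameter $\alpha$ and a natural weight carrying the intrinsic dimension $n$ appear explicitly. In these variables the finite-energy assumption of \cite{DEL} becomes $v\in L^p\cap H^1$ against a weighted measure $d\mu_n$, which here is \emph{not} assumed. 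Mimicking the computation of \cite{DEL} for a smooth solution $v$ produces a formal integral identity
$$\int_{\mathcal{C}} \mathcal{Q}[v]\,d\mu_n = 0,$$
where $\mathcal{Q}[v]$ is a pointwise sum of squares plus a non-negative multiple of $|\nabla_\omega v|^2$, with the coefficient being non-negative exactly when $\alpha^2\leq (d-1)/(n-1)$. Under the threshold hypothesis this forces $\nabla_\omega v\equiv 0$, so that $v$ depends only on $s$; integrating the resulting ODE then yields \eqref{Tal_CKN} up to scaling.

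The heart of the argument is to justify the above identity without global integrability of $v$. I would derive it by testing the equation against a cutoff-modified function of the form $\chi_R(s)\,\Phi[v]$, where $\Phi$ is the natural nonlinear substitution underlying the fast-diffusion flow of \cite{DEL} and $\chi_R$ is supported in $\{|s|\leq R\}$. The resulting boundary terms, localized on the cross-sections $\{s=\pm R\}$, involve $v$ and its first two derivatives weighted by $d\mu_n$. Combining standard elliptic regularity and Harnack-type estimates for the weighted operator, one should obtain uniform pointwise bounds $|v|,|\nabla v|,|\nabla^2 v|\lesssim 1$ on $\mathcal{C}$, which force the boundary contribution at $s=-R$ (i.e.\ large $|x|$) to be $O(R^{\,n-4})$; this vanishes as $R\to\infty$ precisely because $n<4$. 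The complementary lower bound $n>2$ enters in controlling the symmetric boundary contribution near the singularity $x=0$ (that is, $s\to+\infty$) together with the local integrability coming from $u\in\mathcal{D}^{a,b}_{\mathrm{loc}}(\mathbb{R}^d)$.

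I expect the hard part to be the uniform pointwise control of $v$ and of the derived quantities appearing after the fast-diffusion substitution, throughout the whole cylinder. In particular one needs Harnack-type inequalities robust with respect to the weights, plus care in handling the mixed boundary terms produced by the nonlinear substitution $\Phi$, whose scaling is borderline as $n\uparrow 4$. Once these uniform estimates are secured, the rigidity identity of \cite{DEL} passes to the limit $R\to\infty$ and the classification follows.
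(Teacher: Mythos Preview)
Your proposal has a genuine gap: the step ``one should obtain uniform pointwise bounds $|v|,|\nabla v|,|\nabla^2 v|\lesssim 1$ on $\mathcal{C}$'' is not justified and is almost certainly false in general. Harnack and local elliptic regularity give you control on compact pieces of the cylinder, not global uniform bounds as $|s|\to\infty$. Notice that a uniform bound $|v|\lesssim 1$ on the cylinder is equivalent to $u(x)\leq C|x|^{-(a_c-a)}=C|x|^{-(d-2-2a)/2}$, which is exactly the extra decay hypothesis of Corollary~\ref{cor1}; if it were free from the equation alone, that corollary would be vacuous. Relatedly, your claim that the boundary contribution on the slice $\{s=-R\}$ is $O(R^{\,n-4})$ is unexplained: in Emden--Fowler coordinates the cross-sections have fixed measure, so any $R$-power must come from pointwise behaviour of the integrand, which brings you back to the unproved global bounds.

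The paper avoids pointwise control entirely. Working in the $(r,\theta)$ variables with measure $r^{n-1}\,dr\,d\theta$, it introduces the pressure $\p=(n-1)w^{-2/(n-2)}$ and proves (Lemma~\ref{div_id}) that the Bochner quantity satisfies the exact divergence identity
\[
\p^{1-n}\k[\p]=\D_i\Bigl(\tfrac12\p^{1-n}\D_i|\D\p|^2-\tfrac1n\p^{1-n}\L\p\,\D_i\p\Bigr).
\]
Testing this against a radial cutoff $\eta^s$ and absorbing the second-order pieces back into $\k[\p]$ via~\eqref{0.0} yields (Lemma~\ref{int_ineq})
\[
0\leq \int \p^{1-n}\k[\p]\,\eta^s\,d\mu \leq C\int \p^{1-n}|\D\p|^2\,|\eta'|^2\,d\mu.
\]
The right-hand side is then estimated by integrating $\int \p^{1-n}|\D\p|^2\eta^s\,d\mu$ by parts and using the PDE~\eqref{eq_p}; this produces the coefficient $\frac{4-n}{2(n-2)}$ in front of the same integral, together with a term $-C_1\int\p^{1-n}\eta^s\,d\mu$ of good sign and a remainder $\int\p^{3-n}|\D\eta|^2\eta^{s-2}\,d\mu$. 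The hypothesis $n<4$ is used here, purely algebraically, to make $\frac{4-n}{2(n-2)}>0$ so that one can absorb and close the estimate. A final Young inequality on $\p^{3-n}|\D\eta|^2\eta^{s-2}$ gives $\int\p^{1-n}|\D\p|^2\eta^s\,d\mu\leq CR$, hence $\int\p^{1-n}\k[\p]\,d\mu\leq CR^{-1}\to 0$, and rigidity follows from Proposition~\ref{prop-del}. No pointwise bound on $u$, $w$, or $\p$ is ever invoked.
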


We remark that very recently in \cite{CirPol} the authors have obtained closely related classification results in $\mathbb{R}^d$ and in convex cones (see \cite{CirPol} and also \cite{CFR,CPP,LPT} for the critical Sobolev case in convex cones) under the following assumptions
$$
\frac{5}{2}<n<5\quad\text{and}\quad \alpha\leq \sqrt{\dfrac{d-2}{n-2}}\,.
$$
To our knowledge, these are the first rigidity theorems for weighted critical equations that hold without assuming decay or energy conditions. In addition, we show that the rigidity result in Theorem \ref{teo1} holds true in higher intrinsic dimension, provided a suitable pointwise decay condition at infinity holds.

\begin{theorem}\label{teo3} Let $u\in\mathcal{D}^{a,b}_{\mathrm{loc}}(\mathbb{R}^d)$ be a positive solution of \eqref{CKN-eq}, with $p\in (2,2^*)$ and  $a,b$ satisfying \eqref{a-b}-\eqref{a_c}. If 
$$
n\geq 4\quad\text{and}\quad\alpha\leq \sqrt{\dfrac{d-1}{n-1}}\,,
$$
and
$$
u(x)\leq C|x|^{\sigma\alpha}\quad\text{for some }\sigma<-\frac{(n-2)(n-6)}{2(n-4)}\,,
$$
outside a compact set, then $u(x)=\mathcal{U}(x)$, where $\mathcal{U}$ is given by \eqref{Tal_CKN}, up to scaling.
\end{theorem}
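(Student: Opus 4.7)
The strategy is to extend the argument used for Theorem~\ref{teo1} — itself an adaptation of the Dolbeault--Esteban--Loss (DEL) nonlinear flow method of \cite{DEL} to the local energy class $\mathcal D^{a,b}_{\mathrm{loc}}(\mathbb R^d)$ — to arbitrary intrinsic dimension $n\ge 4$, by using the pointwise bound at infinity to dispose of the obstruction that arises in the integration by parts when $n\ge 4$.

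First, I would perform the Emden--Fowler change of coordinates $(r,\omega)\mapsto(s,\omega)$ with $s=-\alpha^{-1}\log r$, writing $u(x)=r^{-2(a_c-a)/(p-2)}\,v(s,\omega)$, so that \eqref{CKN-eq} becomes the autonomous equation on the cylinder $\mathcal C=\mathbb R\times\SS^{d-1}$,
\begin{equation*}
-\partial_{s}^{2}v-\frac{1}{\alpha^{2}(n-1)}\,\Delta_{\SS^{d-1}}v+\lambda\,v=v^{p-1},\qquad v>0,
\end{equation*}
for a suitable $\lambda=\lambda(a,b,d)>0$. The radial Talenti profile $\mathcal U$ becomes an $s$-symmetric soliton $v_\star(s)$, and the claim reduces to showing $v\equiv v_\star$.

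Second, I would reproduce, on $\mathcal C$, the DEL-type integral identity already used in the proof of Theorem~\ref{teo1}: multiplying the equation by the DEL test field (essentially $v^{1+\beta}$ for the critical $\beta$ entering the flow of \cite{DEL}), expanding the derivatives and invoking the Lichnerowicz--Obata Reilly formula on $\SS^{d-1}$, and integrating against a cutoff $\eta_R(s)=\eta(s/R)$, one arrives at a relation of the schematic form
\begin{equation*}
\int_{\mathcal C}\eta_{R}^{2}\,\mathcal I[v]\,ds\,d\omega \;=\;\mathcal R_{R}[v],
\end{equation*}
where $\mathcal I[v]\ge 0$ pointwise whenever $\alpha\le\sqrt{(d-1)/(n-1)}$, with equality precisely on $v_\star$, and where $\mathcal R_{R}[v]$ collects all remainder terms produced by derivatives of the cutoff. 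In the finite-energy regime of \cite{DEL} these remainders vanish automatically; in the regime $2<n<4$ of Theorem~\ref{teo1} they decay thanks to a favourable algebraic sign that disappears at $n=4$.

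Third, I would translate the hypothesis $u(x)\le C|x|^{\sigma\alpha}$ into an exponential bound $v(s,\omega)\le Ce^{\mu(\sigma)s}$ as $s\to-\infty$, and use elliptic regularity on dyadic annuli of $\mathcal C$ to upgrade it to matching bounds for $\partial_s v$, $\partial_s^2v$ and $\nabla_\omega v$; at $s\to+\infty$ the corresponding bound is automatic, because $u\in\mathcal D^{a,b}_{\mathrm{loc}}$ is locally bounded at the origin. Since every term in $\mathcal R_R$ is a polynomial in $v$ and its first two derivatives weighted by $|\eta_R^{(j)}|\le CR^{-j}$, collecting exponents yields
\begin{equation*}
|\mathcal R_R[v]|\;\le\;C R^{-1}\!\int_{R\le|s|\le 2R}\!\!e^{\gamma(\sigma)\,s}\,ds,
\end{equation*}
and the threshold $\sigma<-(n-2)(n-6)/[2(n-4)]$ is precisely the one ensuring $\gamma(\sigma)<0$. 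Letting $R\to\infty$ then forces $\mathcal I[v]\equiv 0$ on $\mathcal C$, whence $v\equiv v_\star$ and $u\equiv\mathcal U$ up to scaling.

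The main obstacle is the third step: one has to track the precise exponents that appear in $\mathcal R_R[v]$, coming both from the DEL test field and from the spherical integration by parts, and to verify that the sharp critical value is indeed $-(n-2)(n-6)/[2(n-4)]$. The rigidity extraction once $\mathcal R_R[v]\to 0$ is identical to that of Theorem~\ref{teo1}; what is genuinely new is the matching of the growth of $v$ at infinity against the decay of the cutoff remainders.
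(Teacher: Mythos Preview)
Your plan follows the right overall shape (cut off the DEL integral identity, show the remainder vanishes, conclude rigidity), but step three as you describe it would not reach the threshold of the theorem, and it is also not what the paper does. The paper never passes to Emden--Fowler cylinder coordinates and never invokes elliptic regularity to upgrade the $C^0$ bound to pointwise derivative bounds. It stays in the $(r,\theta)$ variables of Section~\ref{prel}; the divergence identity of Lemma~\ref{div_id} together with Lemma~\ref{int_ineq} reduce the remainder to exactly
\[
CR^{-2}\int_{(R,2R)\times\mathbb{S}^{d-1}}w^{-\frac{2}{n-2}}|\D w|^2\,d\mu,
\]
and this is controlled by the \emph{interpolation} $w^{-2/(n-2)}|\D w|^2 = w^\gamma\cdot w^{-2/(n-2)-\gamma}|\D w|^2$: one bounds $\sup_{(R,2R)}w^\gamma\le CR^{\sigma\gamma}$ pointwise from the hypothesis, while the integral of the second factor is controlled by the weak energy estimate of Lemma~\ref{estgen} (obtained by testing \eqref{eq_w} with $w^{1+t}\eta^s$, $t=-\tfrac{2}{n-2}-\gamma<-1$, using no derivative information on $w$ at all). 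This gives remainder $\le CR^{-1+\gamma(\sigma+(n-2)/2)}$, and letting $\gamma\downarrow\tfrac{n-4}{n-2}$ produces exactly the stated threshold on $\sigma$.

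If instead you rely only on pointwise control---the universal lower bound $w\ge Ar^{-(n-2)}$ of Lemma~\ref{lb} to estimate $w^{-2/(n-2)}\le Cr^2$, together with a Schauder bound $|\D w|\le Cr^{\sigma-1}$ from your regularity step---the remainder is at best $CR^{\,n-2+2\sigma}$, which vanishes only for $\sigma<-(n-2)/2$. That is the strictly stronger hypothesis of Corollary~\ref{cor1}, not Theorem~\ref{teo3}: at $n=5$, for instance, the theorem allows any $\sigma<3/2$, whereas your route would require $\sigma<-3/2$. So the exponent bookkeeping you defer cannot close at the claimed value without an integral estimate of the type of Lemma~\ref{estgen}; the pointwise-derivative scheme is both unnecessary and insufficient here.
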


Combining Theorems \ref{teo1} and \ref{teo3} we obtain the following corollaries.
\begin{corollary}\label{cor1} Let $u\in\mathcal{D}^{a,b}_{\mathrm{loc}}(\mathbb{R}^d)$ be a positive solution of \eqref{CKN-eq}, with $p\in (2,2^*)$ and  $a,b$ satisfying \eqref{a-b}-\eqref{a_c}. If
$$
d\geq 2\quad\text{and}\quad\alpha\leq \sqrt{\dfrac{d-1}{n-1}}\,,
$$
and
$$
u(x)\leq C|x|^{-\frac{d-2-2a}{2}}\,,
$$
outside a compact set, then $u(x)=\mathcal{U}(x)$, where $\mathcal{U}$ is given by \eqref{Tal_CKN}, up to scaling.
\end{corollary}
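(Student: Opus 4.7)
The plan is to derive the corollary from a case split on the intrinsic dimension $n$, matching each range to one of Theorems \ref{teo1} and \ref{teo3}. First I would observe that the hypotheses \eqref{a-b}, \eqref{a_c} together with $p \in (2, 2^{*})$ force $b - a \in [0, 1)$ (with the strict inequality $a < b$ whenever $d = 2$), so that $n = d/(1+a-b) > 2$ in every admissible configuration. It therefore suffices to treat separately the two regimes $2 < n < 4$ and $n \geq 4$.

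If $2 < n < 4$, the conclusion is immediate from Theorem \ref{teo1}, as no decay hypothesis is required. For $n \geq 4$, I would rewrite the assumed bound in the form demanded by Theorem \ref{teo3}. Inserting $a_c = (d-2)/2$ and $n = d/(1+a-b)$ into the definition \eqref{alpha-n} of $\alpha$, an elementary computation yields
\[
\frac{d-2-2a}{2\alpha} \;=\; \frac{a_c - a + b}{1 + a - b} \;=\; \frac{n-2}{2}\,,
\]
so that the pointwise bound $u(x) \leq C|x|^{-(d-2-2a)/2}$ takes exactly the form $u(x) \leq C|x|^{\sigma\alpha}$ with $\sigma = -(n-2)/2$.

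It then remains to verify the strict inequality $\sigma < -\frac{(n-2)(n-6)}{2(n-4)}$ required by Theorem \ref{teo3}. For $n > 4$, dividing by the positive quantity $(n-2)/2$ reduces it to $\frac{n-6}{n-4} < 1$, i.e., to $n - 4 > n - 6$, which is trivial. The borderline $n = 4$ makes the right-hand threshold infinite, since the numerator $(n-2)(n-6)$ is strictly negative while $n - 4 \to 0^{+}$, so the condition is automatically satisfied. Theorem \ref{teo3} then yields the full classification. The only obstacle is purely computational: confirming the algebraic identity for $(d-2-2a)/(2\alpha)$ and the threshold comparison throughout the range $n \geq 4$. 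No analytic difficulty arises beyond invoking the two preceding theorems.
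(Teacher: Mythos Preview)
Your proposal is correct and follows essentially the same route as the paper: split on $n$, invoke Theorem~\ref{teo1} for $2<n<4$, and for $n\geq 4$ recast the decay hypothesis as $u(x)\leq C|x|^{\sigma\alpha}$ with $\sigma=-\tfrac{n-2}{2}$, then check that this $\sigma$ lies strictly below the threshold of Theorem~\ref{teo3}. The paper verifies the same inequality via the equivalent rewriting $-\tfrac{(n-2)(n-6)}{2(n-4)}\alpha=-\tfrac{d-2-2a}{2}\bigl(1-\tfrac{2}{n-4}\bigr)$; your division-by-$(n-2)/2$ argument is just a different way to see the same elementary fact, and your explicit treatment of the borderline $n=4$ is a small addition.
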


\begin{corollary}\label{cor2} Let $u\in\mathcal{D}^{a,b}_{\mathrm{loc}}(\mathbb{R}^d)$ be a {\em bounded} positive solution of \eqref{CKN-eq}, with $p\in (2,2^*)$ and  $a,b$ satisfying \eqref{a-b}-\eqref{a_c}. If 
$$
2<n\leq 6 \quad\text{and}\quad\alpha\leq \sqrt{\dfrac{d-1}{n-1}}\,,
$$
then $u(x)=\mathcal{U}(x)$, where $\mathcal{U}$ is given by \eqref{Tal_CKN} up to scaling.
\end{corollary}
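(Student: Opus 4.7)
The plan is to deduce Corollary \ref{cor2} directly from Theorems \ref{teo1} and \ref{teo3} via a case split on the intrinsic dimension $n$. For $2<n<4$, Theorem \ref{teo1} imposes no decay condition and applies at once to the bounded positive solution $u$, yielding $u=\mathcal U$ up to scaling. For $4\leq n\leq 6$, the plan is to invoke Theorem \ref{teo3} after verifying that boundedness of $u$ already supplies the pointwise bound $u(x)\leq C|x|^{\sigma\alpha}$ required there, for some admissible exponent $\sigma<-\frac{(n-2)(n-6)}{2(n-4)}$.

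The key observation driving this verification is that the threshold $-\frac{(n-2)(n-6)}{2(n-4)}$ is nonnegative on $[4,6]$: it diverges to $+\infty$ as $n\to 4^+$, is strictly positive for $n\in(4,6)$ (since $(n-2)(n-6)<0$ and $n-4>0$), and equals $0$ at $n=6$. Consequently, for every $n\in[4,6)$ one can take $\sigma=0$, and the hypothesized decay collapses to $u(x)\leq C|x|^{0}=C$, which is nothing but the boundedness assumption. Theorem \ref{teo3} then applies verbatim and yields the conclusion on the whole range $2<n<6$.

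The only genuinely borderline situation is $n=6$, where $\sigma<0$ is now needed strictly. A direct computation from \eqref{alpha-n} shows that at $n=6$ one has $\alpha=(a_c-a)/2>0$, so the requirement translates into genuine polynomial decay $u(x)\leq C|x|^{-\varepsilon}$ at infinity for some $\varepsilon>0$. The plan is to extract this decay from the boundedness of $u$ via the modified inversion symmetry of \cite[Theorem 1.4]{CW}: setting $\tilde u(y):=|y|^{-(d-2-2a)}u(y/|y|^2)$, one obtains a solution of a dual CKN equation for which the boundedness of $u$ at infinity translates into the a priori bound $\tilde u(y)\leq C|y|^{-(d-2-2a)}$ near the origin. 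Since $p<2^*$, the nonlinearity for $\tilde u$ is subcritical in the Sobolev sense, and a removable-singularity argument based on Moser iteration adapted to the weight $|x|^{-2a}$ upgrades $\tilde u$ to an $L^\infty_{\mathrm{loc}}$ function near $0$; unravelling the transform delivers the Talenti-type decay $u(x)\leq C|x|^{-(d-2-2a)}$ at infinity, which corresponds to a perfectly admissible $\sigma<0$ in Theorem \ref{teo3}. The main obstacle will be precisely this last removable-singularity step at the borderline growth rate, which must be carried out within the weighted setting and requires some care, but fits within the standard analytical toolbox developed for CKN-type equations.
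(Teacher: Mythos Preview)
For $2<n<6$ your argument coincides with the paper's: split at $n=4$, apply Theorem \ref{teo1} below and Theorem \ref{teo3} with $\sigma=0$ above. You are correct that at the endpoint $n=6$ the threshold in Theorem \ref{teo3} collapses to $0$ and the choice $\sigma=0$ no longer satisfies the strict inequality there; the paper's proof does not treat this endpoint separately and simply records $\sigma\alpha=0$ for all $n\geq 4$, so your observation in fact flags a point the paper glosses over.

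Where your proposal acquires a genuine gap is in the fix you outline at $n=6$. The exponent $p=2n/(n-2)$ is the scale-invariant, hence \emph{critical}, exponent for the weighted problem: the Sobolev-type inequality adapted to $\mathrm{div}(|y|^{-2a}\nabla\,\cdot\,)$ is precisely \eqref{CKN}, whose critical exponent is $p$, and the modified inversion of \cite{CW} sends \eqref{CKN-eq} to another CKN-critical equation. The inequality $p<2^*$ therefore does not render the transformed problem ``subcritical in the Sobolev sense'', and a routine subcritical Moser iteration will not remove the singularity. The bound $\tilde u(y)\leq C|y|^{-(d-2-2a)}$ you start from is exactly the growth of the weighted fundamental solution; upgrading it to local boundedness near $0$ for a critical nonlinearity is a hard step in its own right (compare the role of \cite{CGS} in the unweighted case), not a consequence of the standard toolbox you invoke. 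Without an independent decay estimate for merely bounded solutions, the case $n=6$ is not closed by your argument.
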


We stress the fact that the limiting exponent in Theorem \ref{teo3} is strictly
larger than $-\frac{d-2-2a}{2}$. Moreover, we note that the exponent $-\frac{d-2-2a}{2}$ is
the threshold decay in order for a radial solution to have finite energy. In the finite energy case, we also recover the result in \cite{DEL} with a simpler proof. 
 
\begin{theorem}\label{teo2} Let $u\in\mathcal{D}^{a,b}(\mathbb{R}^d)$ be a positive solution of \eqref{CKN-eq}, with $p\in (2,2^*)$ and  $a,b$ satisfying \eqref{a-b}-\eqref{a_c}. If
$$
d\geq 2\quad\text{and}\quad\alpha\leq \sqrt{\dfrac{d-1}{n-1}}\,,
$$
then $u(x)=\mathcal{U}(x)$, where $\mathcal{U}$ is given by \eqref{Tal_CKN}, up to scaling.
\end{theorem}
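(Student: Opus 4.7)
The plan is to adapt the cylindrical reformulation from the proofs of Theorems \ref{teo1}--\ref{teo3}, noting that the finite energy hypothesis $u\in\mathcal D^{a,b}(\mathbb R^d)$ replaces the decay bounds used in the low-dimensional regime. First I would perform the Emden--Fowler change of variables: set
\[
v(s,\omega)=|x|^{(d-2-2a)/2}\, u(x),\qquad s=-\tfrac{1}{\alpha}\log|x|,\quad \omega=x/|x|,
\]
(normalized so that $\alpha$ appears as the coefficient of the longitudinal variable). Under this change, equation \eqref{CKN-eq} becomes a semilinear equation on the cylinder $\mathcal C:=\mathbb R\times\mathbb S^{d-1}$ of the form $-v_{ss}-\tfrac{1}{\alpha^2}\Delta_\omega v+\lambda_\star v=v^{p-1}$, with $\lambda_\star=(n-2)^2/4$, and $p-2=4/(n-2)$. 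Crucially, $u\in\mathcal D^{a,b}(\mathbb R^d)$ translates into $v\in H^1(\mathcal C)\cap L^p(\mathcal C)$.

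The heart of the argument is the Bochner/commutator identity used in \cite{DEL}, which in this formulation produces a pointwise inequality for $w=v^{\beta}$ (for a carefully chosen exponent $\beta$) of the schematic form
\[
\frac{1}{2}\,\Delta_{\mathcal C}\bigl(|\nabla w|^2\bigr)-\nabla w\cdot\nabla(\Delta_{\mathcal C} w)
=|\mathrm{Hess}\,w|^2+\mathrm{Ric}_{\mathcal C}(\nabla w,\nabla w)+R(w,\nabla w),
\]
where $\mathrm{Ric}_{\mathcal C}$ provides the factor $d-1$ from the sphere and $R$ collects the lower-order terms, which after integration against the natural weighted measure and after using the equation for $v$ assemble into a non-negative quadratic form precisely when $\alpha^2(n-1)\le d-1$. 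I would then multiply this identity by an appropriate power of $v$ and integrate by parts against a radial cutoff $\chi_R(s)$ supported in $\{|s|\le 2R\}$ and equal to $1$ on $\{|s|\le R\}$.

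The step where the finite-energy hypothesis earns its keep, and where the proof becomes simpler than that of Theorem \ref{teo1}, is the control of the cutoff error terms. Since $v\in H^1(\mathcal C)\cap L^p(\mathcal C)$, standard approximation gives $\|\chi_R' \cdot v\|_{L^2}+\|\chi_R'\cdot v_s\|_{L^2}\to 0$ along a subsequence of radii $R\to\infty$, and similarly for the relevant mixed terms; no pointwise decay estimate on $v$ is needed. In particular, all the boundary/cutoff remainders vanish in the limit, so one obtains globally the integral identity with no remainder. I expect the main technical obstacle to be precisely this justification: producing the correct weighted $L^2$ integrability of the Hessian of $w$ and of the mixed $v_s\nabla_\omega v$ terms starting only from $v\in H^1\cap L^p$ and the equation (by elliptic regularity and Caccioppoli-type estimates on cylindrical annuli), and then arguing that the cutoff commutators do not pick up an uncontrolled boundary contribution.

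Once the global identity holds, the curvature-type quadratic form is non-negative under $\alpha\le\sqrt{(d-1)/(n-1)}$, so each integrand must vanish. The vanishing of the angular Hessian and angular gradient terms forces $v$ to be independent of $\omega$, i.e.\ $u$ is radial. Conclude by solving the resulting one-dimensional ODE on $\mathcal C$ and identifying the unique finite-energy positive solution (up to translation in $s$, which is scaling in $x$) as the profile $\mathcal U$ in \eqref{Tal_CKN}, thereby recovering the theorem of \cite{DEL}.
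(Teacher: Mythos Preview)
Your outline is essentially the original strategy of \cite{DEL} (Emden--Fowler to the cylinder, Bochner identity, cut off in $s$), not the paper's. The paper never passes to the logarithmic cylinder: it keeps the power-law change $u(r,\theta)=w(r^\alpha,\theta)$ on $(0,\infty)\times\mathbb S^{d-1}$ with measure $r^{n-1}\,dr\,d\theta$, and its one new idea for this theorem is Lemma~\ref{div_id}, which rewrites the weighted Bochner quantity $\p^{1-n}\k[\p]$ as a pure divergence. After integration against a radial cutoff and a Young absorption of the second-order remainder into the non-negative left-hand side (Lemma~\ref{int_ineq}), the only surviving error term is $\int \p^{1-n}|\D\p|^2|\eta'|^2\,d\mu$. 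This is then controlled by the elementary superharmonic lower bound $w(r,\theta)\ge A r^{-(n-2)}$ (Lemma~\ref{lb}), which turns the weight $\p^{1-n}\sim w^{-2/(n-2)}$ into a factor $r^2$ that cancels the $R^{-2}$ from $|\eta'|^2$, leaving exactly $\int_{(R,2R)}|\D w|^2\,d\mu\to 0$ by finite energy. No Hessian integrability, no pointwise asymptotics, no $H^2$ information is used.

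Your proposal, by contrast, leaves open precisely the step you flag as the ``main technical obstacle'': controlling the cutoff commutators, which after integrating the Bochner identity carry terms of the type $\mathrm{Hess}\,w\cdot\nabla w$ and $\Delta w\,\nabla w$ against $\chi_R'$, with an additional power of $v$ as weight. The bare membership $v\in H^1\cap L^p$ does \emph{not} give this for free; in \cite{DEL} these terms are handled via the sharp pointwise asymptotic estimates of their Appendix (see also \cite{CirCor,CCR,SV}), which you explicitly say you want to avoid. So as written there is a genuine gap: either you must reinstate those asymptotics (and then you are just reproving \cite{DEL}), or you need a mechanism---like the paper's divergence form plus the absorption trick in Lemma~\ref{int_ineq} and the lower bound of Lemma~\ref{lb}---that downgrades the remainder to a first-order energy tail. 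That mechanism is exactly what makes the paper's proof of Theorem~\ref{teo2} simpler than the original.
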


The proof of our results is a careful adaptation of the argument in \cite{DEL}. One key observation of this paper, contained in Lemma \ref{div_id}, is to rewrite the fundamental Bochner quantity $\k[\p]$ defined in \eqref{def_k}, introduced and analysed in \cite{DEL}, as a divergence of a vector field \`a la Obata \cite{obata}. This fact leads us to prove the rigidity without any higher order asymptotic estimates on the solution at infinity and at the origin which is assured from the finite energy assumption (see \cite[Appendix]{DEL} and also \cite{CirCor,CCR,SV}), but just exploiting the equation and at most a $C^0$-control on the solution.



\

\section{Preliminaries}\label{prel}

\subsection{Notations and a key divergence formula}\label{Not}

For $d\geq 2$ we consider \emph{spherical coordinates:} given $x\in\mathbb{R}^d$ we define 
$$
r=|x| \quad \text{and} \quad \theta=\frac{x}{|x|}\, ,
$$
we consider the \emph{change of variables} $r\mapsto r^{\alpha}$, where $\alpha$ is given by \eqref{alpha-n}, and we define 
\begin{equation}\label{def_w}
u(r,\theta)=:w(r^\alpha,\theta)\, , \quad \text{ for every } (r,\theta)\in(0,+\infty)\times\mathbb{S}^{d-1}\, . 
\end{equation}
In the new variables the derivatives are given by
$$
\mathrm{D}w=\left(\alpha w',\frac{1}{r}\nabla_\theta w\right)\, ,
$$
where $w'$ denotes the derivative with respect to the radial variable $r\in(0,+\infty)$ and $\nabla_\theta$ denotes the gradient with respect to the angular variable $\theta\in\mathbb{S}^{d-1}$. As pointed out in \cite[Section 3]{DEL} the CKN inequalities \eqref{CKN} read as 
$$
\alpha^{1-\frac{2}{p}}\left(\int_{(0,+\infty)\times\mathbb{S}^{d-1}}|w|^p\, d\mu\right)^{\frac{2}{p}} \leq C_{a,b}\int_{(0,+\infty)\times\mathbb{S}^{d-1}} |\D w|^2\, d\mu\, , 
$$
where 
$$
d\mu=r^{n-1}\,dr\,d\theta\, ,
$$
and 
\begin{equation*}
|\mathrm{D}w|^2=\alpha^2|w'|^2+\dfrac{|\nabla_\theta w|^2}{r^2}\,  .
\end{equation*}
Moreover, as shown in \cite[Section 3]{DEL}, if $u$ solves \eqref{CKN-eq} then $w$ given by \eqref{def_w} solves 
\begin{equation}\label{eq_w}
-\mathcal{L}w=w^{p-1} \quad \text{ in } (0,+\infty)\times\mathbb{S}^{d-1}\, ,
\end{equation}
where $p$ is given by \eqref{p_v1} or, equivalently using the definition of $n$ in \eqref{alpha-n}, 
\begin{equation}\label{p_v2}
p=\frac{2n}{n-2}\, , 
\end{equation}
 and $\L$ is given by
\begin{equation}\label{def_L}
\mathcal{L}w:=\D_i\D_i w=\alpha^2 w'' +\alpha^2\frac{n-1}{r}w'+\frac{\Delta_\theta w}{r^2}\, ,
\end{equation}
where we used the Einstein convention over repeated indices and where $\Delta_{\theta}$ denotes the Laplace-Beltrami operator of $\mathbb{S}^{d-1}$.

%

Besides the change of variables the other fundamental ingredient in \cite{DEL} is the following auxiliary function, called \emph{the pressure function}, 
\begin{equation}\label{def_p}
\p:=(n-1) w^{-\frac{2}{n-2}}\, ,
\end{equation}
together with \emph{the Bochner quantity}
\begin{equation}\label{def_k}
\k[\p]:=\frac{1}{2}\mathcal{L}|\mathrm{D}\p|^2-\langle\D\p,\D\L\p\rangle-\frac{1}{n}\left(\L\p\right)^2\, ,
\end{equation}
where 
$$
\langle\D\p,\D\L\p\rangle=\alpha^2\p'\left(\L\p\right)'+\frac{\nabla_\theta\p\cdot\nabla_\theta\left(\L\p\right)}{r^2}\, ,
$$
and $\cdot$ denotes the standard product in $\mathbb{S}^{d-1}$.

As mentioned in the introduction a key difference with respect to the proof in \cite{DEL} is presented in the following Lemma.

\begin{lemma}\label{div_id}
Let $u$ be a positive solution of \eqref{CKN-eq} and $\p\in C^3\left((0,+\infty)\times\mathbb{S}^{d-1}\right)$ given by \eqref{def_p}, then 
\begin{equation}\label{eq_p}
\L\p=\dfrac{2(n-1)^2}{n-2}\p^{-1}+\dfrac{n}{2}|\D\p|^2\p^{-1} \quad \text{ in } (0,+\infty)\times\mathbb{S}^{d-1}\, ,
\end{equation}
and 
\begin{equation}\label{div_id_formula}
\p^{1-n}\k[\p]=\D_i\left( \frac{1}{2}\p^{1-n}\D_i|\D\p|^2-\frac{1}{n}\p^{1-n}\L\p\D_i\p\right)\quad \text{ in } (0,+\infty)\times\mathbb{S}^{d-1}\, . 
\end{equation}
\end{lemma}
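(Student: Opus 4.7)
The plan is to first derive \eqref{eq_p} from the equation for $w$ via the chain rule, and then use \eqref{eq_p} to reduce \eqref{div_id_formula} to a short algebraic identity of Obata type.

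For \eqref{eq_p}, set $\beta := -\frac{2}{n-2}$, so that $\p=(n-1)w^\beta$. The weighted operator $\L$ defined in \eqref{def_L} satisfies the standard chain rules $\D(\Phi(w))=\Phi'(w)\D w$ and $\L(\Phi(w))=\Phi'(w)\L w+\Phi''(w)|\D w|^2$, as one verifies directly from \eqref{def_L}. Applying these with $\Phi(w)=(n-1)w^\beta$, substituting $\L w=-w^{p-1}$ and expressing $|\D w|^2=((n-1)\beta)^{-2}w^{2-2\beta}|\D\p|^2$, one obtains
$$
\L\p \,=\, -(n-1)\beta\, w^{\beta+p-2} + \frac{\beta-1}{(n-1)\beta}\, w^{-\beta}\,|\D\p|^2.
$$
The criticality of the exponent, $p=\frac{2n}{n-2}$, forces $\beta+p-2=-\beta$, so both powers of $w$ collapse to $w^{-\beta}=\frac{n-1}{\p}$. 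The coefficients then simplify to $-(n-1)^2\beta=\frac{2(n-1)^2}{n-2}$ and $\frac{\beta-1}{\beta}=\frac{n}{2}$, yielding \eqref{eq_p}.

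For \eqref{div_id_formula}, I expand the right-hand side using the Leibniz rule $\D_i(fV_i)=\langle\D f,V\rangle+f\,\D_i V_i$ (a formal identity that follows from integration by parts against the measure $d\mu$ and in particular gives $\D_i(f\D_i g)=\langle\D f,\D g\rangle+f\L g$):
$$
\D_i\bigl(\tfrac{1}{2}\p^{1-n}\D_i|\D\p|^2\bigr) \,=\, \tfrac{1-n}{2}\p^{-n}\langle\D\p,\D|\D\p|^2\rangle + \tfrac{1}{2}\p^{1-n}\L|\D\p|^2,
$$
$$
-\D_i\bigl(\tfrac{1}{n}\p^{1-n}\L\p\,\D_i\p\bigr) \,=\, -\tfrac{1-n}{n}\p^{-n}|\D\p|^2\L\p - \tfrac{1}{n}\p^{1-n}\langle\D\p,\D\L\p\rangle - \tfrac{1}{n}\p^{1-n}(\L\p)^2.
$$
Comparing the sum with $\p^{1-n}\k[\p]$ as defined in \eqref{def_k}, the $\tfrac{1}{2}\p^{1-n}\L|\D\p|^2$ and $-\tfrac{1}{n}\p^{1-n}(\L\p)^2$ terms match exactly, and the remaining ones reduce, after clearing the common factor $\tfrac{n-1}{n}\p^{-n}$, to the single requirement
$$
\tfrac{n}{2}\langle\D\p,\D|\D\p|^2\rangle \,=\, |\D\p|^2\L\p + \p\,\langle\D\p,\D\L\p\rangle \,=\, \langle\D\p,\D(\p\L\p)\rangle,
$$
the last equality being the Leibniz rule for $\D$.

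Finally, multiplying \eqref{eq_p} by $\p$ gives $\p\L\p - \tfrac{n}{2}|\D\p|^2 = \tfrac{2(n-1)^2}{n-2}$, a constant; pairing the $\D$-gradient of this relation with $\D\p$ yields exactly the identity required above, closing the argument. The only delicate point is the exponent arithmetic in the first step, where the critical exponent $p=\tfrac{2n}{n-2}$ is essential to make the two powers of $w$ coincide; once \eqref{eq_p} is in hand, the divergence identity reduces to the Obata-type observation that a well-chosen nonlinear combination of $\L\p$ and $|\D\p|^2$ is conserved along solutions.
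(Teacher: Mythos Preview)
Your proof is correct and follows essentially the same route as the paper's: both derive \eqref{eq_p} by the chain rule applied to $\p=(n-1)w^{-2/(n-2)}$, then expand the divergence on the right of \eqref{div_id_formula} via the Leibniz rule and close the identity using \eqref{eq_p}. The only cosmetic difference is that the paper substitutes $\D|\D\p|^2=\tfrac{2}{n}\L\p\,\D\p+\tfrac{2}{n}\p\,\D\L\p$ directly, whereas you phrase the same step as ``$\p\L\p-\tfrac{n}{2}|\D\p|^2$ is constant''.
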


\begin{proof}
We start by proving \eqref{eq_p}. From the definition of $\p$ \eqref{def_p} and the definiton of $\mathcal{L}$ \eqref{def_L} we have 
\begin{align*}
\mathcal{L}\p=&\frac{2n(n-1)\alpha^2}{(n-2)^2}w^{-\frac{2(n-1)}{n-2}}(w')^2-\frac{2(n-1)}{n-2}w^{-\frac{n}{n-2}}\mathcal{L}w + \frac{2n(n-1)}{(n-2)^2}w^{-\frac{2(n-1)}{n-2}}\frac{|\nabla_\theta w|^2}{r^2}\\
=&\frac{2n(n-1)}{(n-2)^2}w^{-\frac{2(n-1)}{n-2}}\left( \alpha^2 (w')^2+\frac{|\nabla_\theta w|^2}{r^2}\right)+ \frac{2(n-1)}{n-2}w^{\frac{2}{n-2}}\, ,
\end{align*}
where we used \eqref{eq_w}. From \eqref{def_p} we immediately see that
$$
\frac{2(n-1)}{n-2}w^{\frac{2}{n-2}}=\frac{2(n-1)^2}{n-2}\p^{-1}\, ,
$$
moreover, 
$$
\alpha^2\frac{2n(n-1)}{(n-2)^2}w^{-\frac{2(n-1)}{n-2}}(w')^2=\alpha^2\frac{n}{2}\p^{-1}|\p'|^2
$$
and 
$$
\frac{2n(n-1)}{(n-2)^2}w^{-\frac{2(n-1)}{n-2}}\frac{|\nabla_\theta w|^2}{r^2}=\frac{n}{2}\p^{-1}\frac{|\nabla_\theta \p|^2}{r^2}\, . 
$$
Summing up, we get 
$$
\mathcal{L}\p=\frac{n}{2}\p^{-1}\left(\alpha^2|\p'|^2+\frac{|\nabla_\theta\p|^2}{r^2}\right) +\frac{2(n-1)^2}{n-2}\p^{-1}\, , 
$$
and \eqref{eq_p} follows immediately. Regarding \eqref{div_id_formula} we compute 
\begin{align}\label{1.0}
\D_i\left( \frac{1}{2}\p^{1-n}\D_i|\D\p|^2-\frac{1}{n}\p^{1-n}\L\p\D_i\p\right)=&\frac{1-n}{2}\p^{-n}\langle\D\p,\D|\D\p|^2\rangle + \frac{1}{2}\p^{1-n}\mathcal{L}|\D\p|^2 \nonumber \\
& -\frac{1-n}{n}\p^{-n}\mathcal{L}\p|\D\p|^2- \frac{1}{n}\p^{1-n}\langle\D\mathcal{L}\p,\D\p\rangle \nonumber \\ 
&-\frac{1}{n}\p^{1-n}\left(\mathcal{L}\p\right)^2\, .
\end{align}
From \eqref{eq_p} we have 
$$
\D|\D\p|^2=\frac{2}{n}\mathcal{L}\p\D\p+\frac{2}{n}\p\D\mathcal{L}\p\, , 
$$
hence 
\begin{equation}\label{1.1}
\frac{1-n}{2}\p^{-n}\langle\D\p,\D|\D\p|^2\rangle =\frac{1-n}{n}\p^{-n}\mathcal{L}\p|\D\p|^2+\frac{1-n}{n}\p^{1-n} \langle\D\p,\D\mathcal{L}\p\rangle\, .
\end{equation}
By using \eqref{1.1} in \eqref{1.0} we obtain 
\begin{align*}
\D_i\left( \frac{1}{2}\p^{1-n}\D_i|\D\p|^2-\frac{1}{n}\p^{1-n}\L\p\D_i\p\right)=& \frac{1}{2}\p^{1-n}\mathcal{L}|\D\p|^2 - \p^{1-n}\langle\D\mathcal{L}\p,\D\p\rangle-\frac{1}{n}\p^{1-n}\left(\mathcal{L}\p\right)^2\, ,
\end{align*}
and \eqref{div_id_formula} follows immediately from the definition of $\k$ in \eqref{def_k}.
\end{proof}

\subsection{Integral estimates} From the differential identity obtained in Lemma \ref{div_id} we deduce several integral estimates which will be used in the proofs of our main results. The first one is substantially contained in \cite[Section 5]{DEL}.

\begin{proposition}\label{prop-del}
Assume $n>d\geq 2$ and let $\p\in C^3\left((0,\infty)\times\mathbb{S}^{d-1}\right)$. Then 
\begin{align}\label{0.0}
\int_{(0,+\infty)\times\mathbb{S}^{d-1}}\p^{1-n}\k[\p]\eta^s\, d\mu\geq& \left(\frac{n-1}{n}\right)\alpha^4\int_{(0,+\infty)\times\mathbb{S}^{d-1}}\p^{1-n}\left| \p''-\frac{\p'}{r} -\frac{\Delta_\theta\p}{\alpha^2(n-1)r^2}\right|^2\eta^s\, d\mu\nonumber \\
& + 2\alpha^2\int_{(0,+\infty)\times\mathbb{S}^{d-1}}\p^{1-n}\frac{1}{r^2}\left| \nabla_\theta \p' -\frac{\nabla_\theta \p }{r}\right|^2\eta^s\, d\mu \nonumber \\
& + (n-2)\left(\dfrac{d-1}{n-1}-\alpha^2\right)\int_{(0,+\infty)\times\mathbb{S}^{d-1}}\p^{1-n} \frac{1}{r^4} |\nabla_\theta\p |^2\eta^s\, d\mu\, . 
\end{align}
for every positive $\eta=\eta(r)\in C^{\infty}_c(0,+\infty)$ and for every $s\geq 0$. In particular, if \eqref{FS_alpha} is in force we have
\begin{equation}\label{geq0}
\int_{(0,+\infty)\times\mathbb{S}^{d-1}}\p^{1-n}\k[\p]\eta^s\, d\mu\geq 0 \, . 
\end{equation}
Moreover, if \eqref{FS_alpha} is in force we have
\begin{equation}\label{0.00}
\int_{(0,+\infty)\times\mathbb{S}^{d-1}}\p^{1-n}\k[\p]\, d\mu= 0 \quad \Longleftrightarrow \quad  u(x)=\mathcal{U}(x)\, ,
\end{equation}
where $u$ is related to $\p$ via \eqref{def_p}-\eqref{def_w}, and $\mathcal{U}$ is given by \eqref{Tal_CKN}, up to scaling.
\end{proposition}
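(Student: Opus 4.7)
The proof is a direct computation following the strategy of \cite{DEL} Section 5. The plan splits into three parts.

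\textbf{Step 1 (Pointwise Bochner decomposition).} The technical heart is an algebraic decomposition of $\k[\p]$ in cylindrical coordinates. I would expand $\tfrac{1}{2}\mathcal{L}|\mathrm{D}\p|^2$, $\langle\mathrm{D}\p,\mathrm{D}\mathcal{L}\p\rangle$, and $\tfrac{1}{n}(\mathcal{L}\p)^2$ using the explicit formula for $\mathcal{L}$ in \eqref{def_L}, apply the Bochner identity on $\mathbb{S}^{d-1}$ to handle the angular cross-terms, and regroup to obtain
\begin{align*}
\k[\p] & = \alpha^4\tfrac{n-1}{n}\Bigl(\p''-\tfrac{\p'}{r}-\tfrac{\Delta_\theta\p}{\alpha^2(n-1)r^2}\Bigr)^2 + \tfrac{2\alpha^2}{r^2}\Bigl|\nabla_\theta\p'-\tfrac{\nabla_\theta\p}{r}\Bigr|^2 \\
& \quad + \tfrac{n-2}{r^4}\Bigl(\tfrac{d-1}{n-1}-\alpha^2\Bigr)|\nabla_\theta\p|^2 + Q[\p],
\end{align*}
where $Q[\p]\geq 0$ collects the remaining non-negative contributions (morally the squared traceless angular Hessian of $\p$ along $\mathbb{S}^{d-1}$). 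Structurally this mirrors the Bochner identity $\tfrac{1}{2}\mathcal{L}|\mathrm{D}\p|^2 = |\mathrm{Hess}\,\p|^2 + \langle\mathrm{D}\p,\mathrm{D}\mathcal{L}\p\rangle + \mathrm{Ric}(\mathrm{D}\p,\mathrm{D}\p)$ in the underlying weighted $n$-dimensional geometry; the Felli--Schneider threshold $(d-1)/(n-1)$ is precisely the sign of the corresponding effective Ricci curvature.

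\textbf{Step 2 (Integral inequality).} Under \eqref{FS_alpha} the coefficient $(n-2)\bigl(\tfrac{d-1}{n-1}-\alpha^2\bigr)$ is non-negative, so all four summands above are non-negative. Multiplying by $\p^{1-n}\eta^s\geq 0$, integrating over $(0,+\infty)\times\mathbb{S}^{d-1}$ against $d\mu$, and dropping the non-negative remainder $Q[\p]$ yields \eqref{0.0}. Statement \eqref{geq0} is then immediate: since $\p^{1-n}\k[\p]\geq 0$ pointwise, its Lebesgue integral is well-defined in $[0,+\infty]$ and non-negative.

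\textbf{Step 3 (Rigidity).} The reverse implication in \eqref{0.00} is a direct verification: for $u=\mathcal{U}$ in \eqref{Tal_CKN}, a short calculation with \eqref{def_w}-\eqref{def_p} gives $\p(r,\theta)=Ar^2+B$ for constants $A,B$, and every term in the decomposition vanishes identically. Conversely, assume $\int \p^{1-n}\k[\p]\,d\mu=0$; non-negativity of the integrand forces $\k[\p]\equiv 0$, and hence each of the four summands above vanishes identically. In the strict regime $\alpha^2 < (d-1)/(n-1)$ the third summand forces $\nabla_\theta\p\equiv 0$, so $\p$ is radial; the first summand then reduces to $\p''=\p'/r$, whose positive solutions are exactly $\p=Ar^2+B$. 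In the equality case $\alpha^2=(d-1)/(n-1)$, vanishing of $Q[\p]$ gives an Obata-type equation for the angular Hessian of $\p|_{\mathbb{S}^{d-1}}$, which combined with the second summand (yielding $\nabla_\theta\p=rC(\theta)$) again forces $\p=Ar^2+B$ up to translation/scaling. Inverting \eqref{def_p}-\eqref{def_w} then produces $u=\mathcal{U}$ up to scaling.

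\textbf{Main obstacle.} The algebraic regrouping in Step 1 is the delicate point: many cross-terms appear and the precise coefficient $(n-2)\bigl(\tfrac{d-1}{n-1}-\alpha^2\bigr)$ in front of $|\nabla_\theta\p|^2/r^4$ reflects a subtle interplay between the ambient dimension $d$ and the intrinsic dimension $n$. Once this decomposition is secured, the integral inequality and \eqref{geq0} are elementary, and the rigidity in \eqref{0.00} reduces to the ODE $\p''=\p'/r$ (strict FS case) or a standard Obata argument on $\mathbb{S}^{d-1}$ (equality case).
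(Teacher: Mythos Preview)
Your overall architecture matches the paper's, but Step~1 contains a genuine gap: the decomposition you write down is \emph{not} pointwise. Concretely, after the Bochner formula on $\mathbb{S}^{d-1}$ you get
\[
\k_{\mathbb{S}^{d-1}}[\p]=|\mathrm{Hess}_\theta\p|^2-\tfrac{1}{n-1}(\Delta_\theta\p)^2+(d-2)|\nabla_\theta\p|^2-(n-2)\alpha^2|\nabla_\theta\p|^2,
\]
and splitting off the squared traceless Hessian leaves a term $\bigl(\tfrac{1}{d-1}-\tfrac{1}{n-1}\bigr)(\Delta_\theta\p)^2$ with the \emph{wrong} trace coefficient (recall $n>d$). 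There is no pointwise relation between $(\Delta_\theta\p)^2$ and $|\nabla_\theta\p|^2$, so you cannot absorb this mismatch into a non-negative remainder $Q[\p]$ and simultaneously produce the exact coefficient $(n-2)\bigl(\tfrac{d-1}{n-1}-\alpha^2\bigr)$ in front of $|\nabla_\theta\p|^2/r^4$. What actually happens (this is \cite[Lemma~5.2]{DEL}, which the paper invokes) is that one multiplies by $\p^{1-n}$ and integrates over $\mathbb{S}^{d-1}$: an integration by parts then couples $(\Delta_\theta\p)^2$ with $\nabla_\theta\p\cdot\nabla_\theta(\p^{1-n})$, and it is precisely the exponent $1-n$ in the weight that makes the bookkeeping close up to yield the Felli--Schneider coefficient. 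The inequality \eqref{0.0} is therefore intrinsically an \emph{integral} inequality on each sphere, not a pointwise one.

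This propagates: your claim in Step~2 that ``$\p^{1-n}\k[\p]\geq 0$ pointwise'' is false in general, and the opening move of Step~3 (``non-negativity of the integrand forces $\k[\p]\equiv 0$'') is therefore unjustified. The paper's route is to first apply the pointwise identity of \cite[Lemma~5.1]{DEL} (or \cite[Lemma~5.3]{DEL} when $d=2$), which isolates $\k_{\mathbb{S}^{d-1}}[\p]/r^4$ as the angular remainder, then invoke the weighted spherical estimate \cite[Lemma~5.2]{DEL} for that remainder, and finally integrate in $r$ against $\eta^s\,r^{n-1}\,dr$. The rigidity \eqref{0.00} is likewise not obtained by a pointwise argument but by quoting \cite[Corollary~5.5]{DEL}, whose proof tracks the equality cases through the same weighted integration by parts.
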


\begin{proof} Let $d\geq 3$. From  \cite[Lemma 5.1]{DEL} we have
\begin{align}\label{eq22}
\k[\p]=\left(\frac{n-1}{n}\right)\alpha^4\left| \p''-\frac{\p'}{r} -\frac{\Delta_\theta\p}{\alpha^2(n-1)r^2}\right|^2+ 2\alpha^2\frac{1}{r^2}\left| \nabla_\theta \p' -\frac{\nabla_\theta \p }{r}\right|^2+\frac{1}{r^4}\k_{\mathbb{S}^{d-1}}[\p]\,,
\end{align}
where
$$
\k_{\mathbb{S}^{d-1}}[\p]:=\frac{1}{2}\Delta_\theta|\nabla_\theta\p|^2-\nabla_\theta\p\cdot\nabla_\theta\Delta_\theta\p-\frac{1}{n-1}\left(\Delta_\theta\p\right)^2-(n-2)\alpha^2|\nabla_\theta \p|^2\,. 
$$
Moreover, from \cite[Lemma 5.2]{DEL}, one has
\begin{align}\label{eq23}
\int_{\mathbb{S}^{d-1}}\p^{1-n}\k_{\mathbb{S}^{d-1}}[\p]\,d\theta \geq (n-2)\left(\dfrac{d-1}{n-1}-\alpha^2\right)\int_{\mathbb{S}^{d-1}}\p^{1-n} \frac{1}{r^4} |\nabla_\theta\p |^2\eta^s\, d\theta\,.
\end{align}
Let $\eta=\eta(r)\in C^{\infty}_c(0,\infty)$ be positive and let $s\geq 0$. Multiplying \eqref{eq22} by $\eta^s$ and integrating on $(0,\infty)$, by using \eqref{eq23} we get \eqref{0.0}. In particular \eqref{geq0} immediately follows if \eqref{FS_alpha} holds. The rigidity in \eqref{0.00} is a consequence of \cite[Corollary 5.5]{DEL}.

The case $d=2$ can be proved with a similar argument, using \cite[Lemma 5.3]{DEL} instead of \cite[Lemma 5.1]{DEL}.
\end{proof}

The following key integral estimate will be used in the proofs of Theorems \ref{teo1}, \ref{teo3} and \ref{teo2}.

\begin{lemma}\label{int_ineq}  Assume $n>d\geq 2$ and that \eqref{FS_alpha} holds. Let $\p\in C^3\left((0,\infty)\times\mathbb{S}^{d-1}\right)$ then
$$
0\leq \int_{(0,+\infty)\times\mathbb{S}^{d-1}}\p^{1-n}\k[\p]\eta^s\, d\mu\leq C\int_{(0,+\infty)\times\mathbb{S}^{d-1}}\p^{1-n}|\D\p|^2|\eta'|^2\, d\mu\, ,
$$
for every $s\geq 2$ and $\eta=\eta(r)\in C^{\infty}_c(0,+\infty)$ positive, for some $C>0$.
\end{lemma}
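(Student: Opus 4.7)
The lower bound $0\le J:=\int \p^{1-n}\k[\p]\eta^s\, d\mu$ is immediate: it is the content of \eqref{geq0} in Proposition \ref{prop-del}, which holds under the assumption \eqref{FS_alpha}.

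For the upper bound, my plan is to exploit Lemma \ref{div_id}: writing $W_i:=\frac{1}{2}\p^{1-n}\D_i|\D\p|^2-\frac{1}{n}\p^{1-n}\L\p\D_i\p$, the identity \eqref{div_id_formula} reads $\p^{1-n}\k[\p]=\D_i W_i$. Multiplying by $\eta^s$ and using the compact support of $\eta$ to discard boundary terms, integration by parts (with respect to the measure $d\mu=r^{n-1}\,dr\,d\theta$, for which $\L=\D_i\D_i$ is self-adjoint) yields
\begin{equation*}
J=-s\int \p^{1-n}\eta^{s-1}\Bigl(\tfrac{1}{2}\D_i|\D\p|^2-\tfrac{1}{n}\L\p\,\D_i\p\Bigr)\D_i\eta\, d\mu.
\end{equation*}
A second integration by parts transfers $\D_i$ off of $\D_i|\D\p|^2$; in parallel, the equation \eqref{eq_p} from Lemma \ref{div_id} is substituted into the $\L\p$-factor, so that $\L\p$ is replaced by $\frac{2(n-1)^2}{n-2}\p^{-1}+\frac{n}{2}|\D\p|^2\p^{-1}$. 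This is the essential mechanism by which all second-order derivatives of $\p$ are eliminated at the cost of producing first-order expressions in $\p$ together with derivatives of $\eta$.

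The resulting expansion organizes $J$ into two types of contributions: \emph{good} integrals of the form $\int \p^{1-n}|\D\p|^2\,[\eta^{s-2}|\D\eta|^2+\eta^{s-1}\L\eta+\eta^{s-1}\p^{-1}\D_i\p\,\D_i\eta]\, d\mu$, which carry the desired $|\D\p|^2$ factor, and \emph{lower order} integrals (without $|\D\p|^2$) of the form $\int \p^{-n}\D_i\p\,\eta^{s-1}\D_i\eta\, d\mu$. The latter I would rewrite using the primitive identity $\p^{-n}\D_i\p=-\frac{1}{n-1}\D_i\p^{1-n}$, integrate by parts once more, and observe that after careful arithmetic they reduce to $\int \p^{1-n}\eta^{s-2}|\D\eta|^2$ and $\int \p^{1-n}\eta^{s-1}\L\eta$ type integrals with explicit coefficients depending only on $n$ and $s$.

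The final and delicate step is to absorb into the desired right-hand side the remaining pieces that do not a priori have the form $\p^{1-n}|\D\p|^2|\eta'|^2$. Here I would use Young's inequality $|ab|\le \epsilon a^2+\frac{1}{4\epsilon}b^2$, pairing cross-terms so that one factor is controlled by the trace-free Hessian-squared pieces $|\p''-\p'/r-\Delta_\theta\p/(\alpha^2(n-1)r^2)|^2$ and $|\nabla_\theta\p'-\nabla_\theta\p/r|^2/r^2$ that appear in the pointwise decomposition \eqref{eq22}, and the other factor is $|\D\p||\eta'|$. Choosing $\epsilon$ small enough and moving the $\epsilon J$ contribution back to the left, one arrives at $J\le C\int \p^{1-n}|\D\p|^2|\eta'|^2\eta^{s-2}\, d\mu$, and finally $\eta^{s-2}\le\|\eta\|_\infty^{s-2}$ is absorbed into $C$ (the standard setting of interest being $\eta\le 1$, via the usual cutoff construction). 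The main obstacle is keeping track, term by term, of the precise algebraic identities to ensure that all non-positive cross terms admit such an absorption; the angular Bochner $\k_{\mathbb{S}^{d-1}}[\p]$ is only non-negative after integration in $\theta$ by \eqref{eq23}, so the Cauchy--Schwarz/Young estimate on the angular components must be carried out in integrated form, using the spectral gap $d-1$ of $\mathbb{S}^{d-1}$ that underlies \eqref{eq23}.
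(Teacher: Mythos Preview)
Your opening and closing ideas are right: the lower bound is exactly \eqref{geq0}, the first integration by parts against $\eta^s$ using the divergence identity \eqref{div_id_formula} is the correct starting move, and the final mechanism --- split cross terms by Young as $\eps(\text{trace--free Hessian})^2+C_\eps|\D\p|^2|\eta'|^2$ and absorb the $\eps$--piece into the left via the lower bound \eqref{0.0} --- is precisely what closes the argument.

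The gap is in the middle. Your proposed \emph{second} integration by parts on $\tfrac12\D_i|\D\p|^2$ together with the substitution $\L\p=\frac{2(n-1)^2}{n-2}\p^{-1}+\frac n2|\D\p|^2\p^{-1}$ does not lead to controllable terms. Carrying it out one finds, besides the ``good'' pieces, a cubic contribution
\[
\frac{s(2-n)}{2}\int \eta^{s-1}\,\p^{-n}\,|\D\p|^2\,\langle\D\p,\D\eta\rangle\,d\mu,
\]
which scales like $|\D\p|^3|\eta'|$ and admits no Young splitting with one factor bounded by the target $\p^{1-n}|\D\p|^2|\eta'|^2$. Likewise, your ``lower order'' integral $\int\p^{-n}\langle\D\p,\D\eta\rangle\eta^{s-1}\,d\mu$, after your further integration by parts, becomes $\frac{1}{n-1}\int\p^{1-n}\bigl[(s-1)\eta^{s-2}|\D\eta|^2+\eta^{s-1}\L\eta\bigr]\,d\mu$, which carries no $|\D\p|^2$ factor at all and hence is not dominated by the stated right--hand side. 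More conceptually, once you eliminate second derivatives of $\p$ by this route, there is nothing left in the integrand to pair with the trace--free Hessian pieces you invoke in your last paragraph; the absorption mechanism you correctly describe has been undercut by the preceding step.

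The paper avoids this by \emph{not} performing a second integration by parts and \emph{not} substituting \eqref{eq_p}. After the first integration by parts, since $\eta=\eta(r)$ the only surviving component is radial, and a direct computation of $\tfrac12(|\D\p|^2)'-\tfrac1n\L\p\,\p'$ using \eqref{def_L} gives exactly
\[
\frac{\alpha^2(n-1)}{n}\Bigl(\p''-\tfrac{\p'}{r}-\tfrac{\Delta_\theta\p}{\alpha^2(n-1)r^2}\Bigr)\p'+\frac{1}{r}\Bigl(\nabla_\theta\p'-\tfrac{\nabla_\theta\p}{r}\Bigr)\cdot\frac{\nabla_\theta\p}{r},
\]
which is already a sum of products (trace--free Hessian piece)$\times$(first derivative of $\p$). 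Young's inequality then yields $\eps$ times the squared Hessian pieces appearing in \eqref{0.0}, plus $C_\eps\,|\D\p|^2|\eta'|^2\eta^{s-2}$, and the $\eps$--part is absorbed by \eqref{0.0}. The key point you are missing is this explicit algebraic identification of the radial integrand with the very quantities that \eqref{0.0} controls.
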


\begin{proof}
The first inequality is the one in \eqref{geq0}. Regarding the second inequality, from Lemma \ref{div_id} we have
$$
\int_{(0,+\infty)\times\mathbb{S}^{d-1}}\p^{1-n}\k[\p]\eta^s\, d\mu=\int_{(0,\infty)\times\mathbb{S}^{d-1}}\D_i\left( \frac{1}{2}\p^{1-n}\D_i|\D\p|^2-\frac{1}{n}\p^{1-n}\L\p\D_i\p\right)\eta^s\, d\mu\, . 
$$
Since $\eta=\eta(r)$, integrating by parts we obtain
\begin{equation}\label{0.2}
\int_{(0,+\infty)\times\mathbb{S}^{d-1}}\p^{1-n}\k[\p]\eta^s\, d\mu=-s\int_{(0,+\infty)\times\mathbb{S}^{d-1}}\p^{1-n}\left[ \frac{1}{2}(|\D\p|^2)'-\frac{1}{n}\L\p\, \p'\right]\eta'\eta^{s-1}\, d\mu\, . 
\end{equation}
From \eqref{def_L} we have 
\begin{equation}\label{0.1}
 \frac{1}{2}(|\D\p|^2)'-\frac{1}{n}\L\p\, \p'= \frac{1}{2}(|\D\p|^2)'- \frac{\alpha^2}{n}  \p''\p' -\frac{\alpha^2(n-1)}{n}\frac{(\p')^2}{r}-\frac{1}{n}\frac{\Delta_\theta \p}{r^2}\p'\, .
\end{equation}
Moreover, since
$$
|\mathrm{D}\p|^2=\alpha^2|\p'|^2+\dfrac{|\nabla_\theta\p|^2}{r^2}\,  ,
$$
then  
$$
\frac{1}{2}(|\D\p|^2)'=\frac{1}{2}\left(\alpha^2|\p'|^2+\dfrac{|\nabla_\theta\p|^2}{r^2}\right)'=\alpha^2\p''\p'+\frac{1}{r^2} \nabla_\theta\p'\cdot\nabla_\theta\p - \frac{1}{r^3}|\nabla_\theta \p|^2\, .
$$
Therefore \eqref{0.1} reads as 
\begin{align*}
\frac{1}{2}(|\D\p|^2)'-\frac{1}{n}\L\p\, \p'=&\dfrac{\alpha^2(n-1)}{n}\left( \p''-\frac{\p'}{r} -\frac{\Delta_\theta\p}{\alpha^2(n-1)r^2}\right)\p'\\
&+ \frac{1}{r}\left( \nabla_\theta\p'-\frac{\nabla_\theta\p}{r}\right)\cdot\frac{\nabla_\theta\p}{r}\, . 
\end{align*}
In addition, from Young and Cauchy-Schwarz inequalities we have 
\begin{align*}
\left|\frac{1}{2}(|\D\p|^2)'-\frac{1}{n}\L\p\, \p'\right||\eta'|\eta^{s-1}\leq& \varepsilon \alpha^4\left| \p''-\frac{\p'}{r} -\frac{\Delta_\theta\p}{\alpha^2(n-1)r^2}\right|^2\eta^s +\varepsilon\alpha^2 \frac{1}{r^2}\left| \nabla_\theta\p'-\frac{\nabla_\theta\p}{r}\right|^2\eta^s \\
& + C_{\epsilon,\alpha} \left(\alpha^2 (\p')^2 + \frac{|\nabla_\theta\p|^2}{r^2}\right)|\eta'|^2\eta^{s-2} \\ 
=&\alpha^4\left| \p''-\frac{\p'}{r} -\frac{\Delta_\theta\p}{\alpha^2(n-1)r^2}\right|^2\eta^s +\varepsilon\alpha^2 \frac{1}{r^2}\left| \nabla_\theta\p'-\frac{\nabla_\theta\p}{r}\right|^2\eta^s\\ 
& + C_{\epsilon,\alpha} |\D\p|^2|\eta'|^2\eta^{s-2} \, , 
\end{align*}
for all $\varepsilon>0$ and for some $C_{\epsilon,\alpha}>0$.

Summing up, from \eqref{0.0} and \eqref{0.2}, choosing $\varepsilon=\varepsilon(n,s)$ small enough we have 
$$
\int_{(0,+\infty)\times\mathbb{S}^{d-1}}\p^{1-n}\k[\p]\eta^s\, d\mu\leq C\int_{(0,+\infty)\times\mathbb{S}^{d-1}}\p^{1-n}|\D\p|^2|\eta'|^2\, d\mu\, ,
$$
for every $s\geq 2$ and for some $C>0$.

\end{proof}

\subsection{A pointwise lower bound for $\L$-superharmonic function}

The following general pointwise lower bound will be used in the proofs of Theorems \ref{teo3} and \ref{teo2}. This is a classical result, for the sake of completeness we include its proof.

\begin{lemma} \label{lb}
With the notation introduced in Subsection \ref{Not} let $f=f(r,\theta)\in C^2((0,+\infty)\times\mathbb{S}^{d-1})$ be a positive $\L$-superharmonic function, i.e. 
$$
\begin{cases}
\L f \leq 0\quad &\text{ in } (0,+\infty)\times\mathbb{S}^{d-1}\\
f>0\, ,
\end{cases}
$$
where $\mathcal{L}$ is given by \eqref{def_L}. Then there exist positive constants $\rho,A>0$ such that
$$
f(r,\theta) \geq \frac{A}{r^{n-2}}\quad\text{on } (\rho,+\infty)\times\mathbb{S}^{d-1}\,.
$$
\end{lemma}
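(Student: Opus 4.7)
The plan is to run a classical comparison/barrier argument using the $\L$-harmonic function $g(r)=r^{-(n-2)}$. First I would verify that $g$ is indeed a positive $\L$-harmonic function on $(0,+\infty)\times\mathbb{S}^{d-1}$: since $g$ is radial and depends only on $r$, the angular term in \eqref{def_L} drops, and a direct computation gives $\alpha^2 g''+\alpha^2\tfrac{n-1}{r}g' = \alpha^2(n-2)(n-1)r^{-n}-\alpha^2(n-1)(n-2)r^{-n}=0$. Equivalently, the radial part of $\L$ has the conservative form $r^{-(n-1)}(r^{n-1}g')'$, so its non-constant solutions are exactly multiples of $r^{-(n-2)}$; this identifies $g$ as the natural barrier for $\L$.

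Next, I would fix any $\rho>0$ and exploit the positivity and continuity of $f$ on the compact slice $\{r=\rho\}\times\mathbb{S}^{d-1}$ to set
\[
m_\rho := \min_{\theta\in\mathbb{S}^{d-1}} f(\rho,\theta) > 0, \qquad A := m_\rho \rho^{\,n-2}>0,
\]
so that $A\,\rho^{-(n-2)} \le f(\rho,\theta)$ for every $\theta\in\mathbb{S}^{d-1}$. For $R>\rho$, I would then introduce the truncated barrier
\[
\phi_R(r):=A\bigl(r^{-(n-2)}-R^{-(n-2)}\bigr),
\]
which satisfies $\L\phi_R=0$ in the annular cylinder $\Omega_R:=(\rho,R)\times\mathbb{S}^{d-1}$ and vanishes on the slice $\{r=R\}$. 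On $\{r=\rho\}$ we have $\phi_R(\rho)=A\rho^{-(n-2)}-AR^{-(n-2)}\le A\rho^{-(n-2)}\le f(\rho,\theta)$ for every $\theta$; on $\{r=R\}$ we have $\phi_R(R)=0\le f(R,\theta)$ since $f>0$.

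Now I would set $v:=f-\phi_R$ in $\overline{\Omega_R}$. By construction $\L v=\L f\le 0$, so $v$ is $\L$-superharmonic, and $v\ge 0$ on the parabolic-type boundary $\{r=\rho\}\cup\{r=R\}$ of $\Omega_R$ (the only relevant boundary components, since $\mathbb{S}^{d-1}$ is closed). Because $\L$ is a uniformly elliptic operator on $\overline{\Omega_R}$ with no zero-order term, the classical weak minimum principle applies and gives $v\ge 0$, i.e.
\[
f(r,\theta)\ge A\bigl(r^{-(n-2)}-R^{-(n-2)}\bigr) \quad\text{on }\overline{\Omega_R}.
\]
Finally, letting $R\to+\infty$ at each fixed $(r,\theta)\in(\rho,+\infty)\times\mathbb{S}^{d-1}$ yields $f(r,\theta)\ge A r^{-(n-2)}$, which is the claim.

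I do not expect a genuine obstacle here, as the only ingredients used are the explicit $\L$-harmonic profile $r^{-(n-2)}$ and the weak maximum principle on a compact annular cylinder where $\L$ is strictly elliptic. The only small care needed is to keep the barrier well-posed (which forces $n>2$, automatic under our running assumptions since $p=2n/(n-2)>2$) and to make sure the minimum in the definition of $A$ is attained, which is guaranteed by compactness of $\mathbb{S}^{d-1}$ and continuity of $f$.
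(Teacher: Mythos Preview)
Your proposal is correct and follows essentially the same comparison argument as the paper: both identify $r^{-(n-2)}$ as the radial $\L$-harmonic barrier, fix $A=\rho^{n-2}\min_{\mathbb{S}^{d-1}}f(\rho,\cdot)$, and apply a maximum principle to $f-A r^{-(n-2)}$. The only cosmetic difference is that the paper works directly on the unbounded domain $(\rho,\infty)\times\mathbb{S}^{d-1}$, using $\liminf_{r\to\infty}\bigl(f-A r^{-(n-2)}\bigr)\geq 0$ together with the strong maximum principle to rule out an interior negative minimum, whereas you truncate with $\phi_R$, apply the weak minimum principle on the compact annulus $(\rho,R)\times\mathbb{S}^{d-1}$, and let $R\to\infty$; both are standard and interchangeable here.
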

\begin{proof} Let $\rho>0$ be such that $\L f\leq0$ in $(\rho,+\infty)\times\mathbb{S}^{d-1}$ and define  
$$
v(r,\theta):=f(r,\theta)- \frac{A}{r^{n-2}}\,, \quad \text{ for } (r,\theta)\in(\rho,+\infty)\times\mathbb{S}^{d-1}
$$
where $A:=\rho^{n-2}\min_{(\rho,+\infty)\times\mathbb{S}^{d-1}} f>0$. Then $v\geq 0$ in $(\rho,+\infty)\times\mathbb{S}^{d-1}$ and
$$
\L v\leq 0 \quad \text{ in } (\rho,+\infty)\times\mathbb{S}^{d-1}
$$
since 
$$
\L r^{2-n}=\alpha^2(2-n)(1-n)r^{-n}+\alpha^2(n-1)(2-n)r^{-n} =0\,.
$$ 
In addition, from the fact that $\liminf_{r\to\infty} v (r,\theta) \geq 0$, if $\inf_{(\rho,+\infty)\times\mathbb{S}^{d-1}} v<0$, then $v$ attains its negative absolute minimum at a point in $(\rho,\infty)\times\mathbb{S}^{d-1}$. By the strong maximum principle, then $v$ must be constant and negative on its domain, a contradiction. Thus $v\geq 0$ in $(\rho,+\infty)\times\mathbb{S}^{d-1}$ and the conclusion follows.
\end{proof}

\subsection{A weak energy estimate} The following weak energy estimate will be used in the proof of Theorem \ref{teo3}.

\begin{lemma}\label{estgen}
Let $u$ be a positive solution of \eqref{CKN-eq} and let $w$, given by \eqref{def_w}, a solution of \eqref{eq_w}. Then, for every $t<-1$, there exists $C>0$ such that
$$
\int_{(0,R)\times\mathbb{S}^{d-1}}w^{p+t}\,d\mu+\int_{(0,R)\times\mathbb{S}^{d-1}}w^t|\D w|^2\,d\mu \leq C R^{\beta} 
$$
where
$$
\beta:=
\begin{cases}
-\frac{(n-2)}{2}t&\quad\text{if }\,t> -2\\
-(n-2)(1+t)&\quad\text{if }\,t\leq-2\,.
\end{cases}
$$
\end{lemma}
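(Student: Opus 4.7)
The plan is to test \eqref{eq_w} against $w^{t+1}\eta^s$, with $\eta=\eta(r)\in C^\infty_c(0,+\infty)$ a radial cutoff and $s\geq 2$ chosen large, and then integrate by parts. Since $\L=\D_i\D_i$ and $\eta$ is compactly supported in $r$, this yields
$$
(t+1)\int w^t|\D w|^2\eta^s\,d\mu+s\alpha^2\int w^{t+1}w'\eta^{s-1}\eta'\,d\mu=\int w^{p+t}\eta^s\,d\mu.
$$
Because $t+1<0$, the gradient term appears with a positive sign on the left. A Young's inequality of the form $|w^{t+1}w'\eta^{s-1}\eta'|\leq\eps w^t(w')^2\eta^s+C_\eps w^{t+2}(\eta')^2\eta^{s-2}$, combined with $\alpha^2(w')^2\leq|\D w|^2$, absorbs the cross term and yields the key inequality
$$
\frac{|t+1|}{2}\int w^t|\D w|^2\eta^s\,d\mu+\int w^{p+t}\eta^s\,d\mu\leq C\int w^{t+2}(\eta')^2\eta^{s-2}\,d\mu. \qquad(\ast)
$$

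For $-2<t<-1$ (so $t+2>0$), I would control the right-hand side of $(\ast)$ by H\"older's inequality with conjugate exponents $q_1:=(p+t)/(t+2)$ and $q_2:=(p+t)/(p-2)$, splitting $w^{t+2}(\eta')^2\eta^{s-2}$ as $[w^{t+2}\eta^{s/q_1}]\cdot[(\eta')^2\eta^{s-2-s/q_1}]$; for $s\geq 2q_2$, a further Young's inequality absorbs the resulting factor $\int w^{p+t}\eta^s\,d\mu$ into the left of $(\ast)$. Choosing $\eta$ with $|\eta'|\leq C/R$ on the outer annulus $[R,2R]$, and using $p=2n/(n-2)\Rightarrow 2q_2=(p+t)(n-2)/2$, so that $n-2q_2=-t(n-2)/2$, one obtains
$$
\int(\eta')^{2q_2}\eta^{s-2q_2}\,d\mu\leq CR^{-t(n-2)/2},
$$
which is precisely the exponent $\beta$ in this range.

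For $t\leq -2$ (so $t+2\leq 0$), the factor $w^{t+2}$ on the right of $(\ast)$ is no longer controlled by the left. Instead I would use that $w$ is $\L$-superharmonic since $-\L w=w^{p-1}>0$, and apply Lemma \ref{lb} to obtain $w(r,\theta)\geq Ar^{-(n-2)}$ for $r\geq\rho$; because $t+2\leq 0$, this inverts to the pointwise upper bound $w^{t+2}\leq A^{t+2}r^{-(n-2)(t+2)}$, so
$$
\int_R^{2R}w^{t+2}(\eta')^2 r^{n-1}\,dr\leq CR^{-2}\int_R^{2R}r^{-(n-2)(t+2)+n-1}\,dr\leq CR^{-(n-2)(1+t)},
$$
which is $\beta$ in this range. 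As a consistency check, the two formulas agree at the crossover $t=-2$, both giving $\beta=n-2$.

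The main obstacle I anticipate lies at the inner endpoint $r\to 0$: the claim bounds integrals on $(0,R)\times\SS^{d-1}$, but $\eta$ must be compactly supported in $(0,+\infty)$ to justify the integration by parts. I would use a two-sided cutoff supported in $[\rho_0/2,2R]$ and equal to $1$ on $[\rho_0,R]$, and let $\rho_0\to 0^+$. In Case A the contribution from $[\rho_0/2,\rho_0]$ scales as $\rho_0^{-t(n-2)/2}\to 0$ (since $t<0$); in Case B the same analysis applies once interior Harnack bounds on the positive solution $w$ on compact subsets of $(0,+\infty)\times\SS^{d-1}$ are invoked. Verifying that this inner contribution remains uniformly controlled in $\rho_0$ and does not spoil the bound is the most delicate point of the argument, though essentially routine given the local regularity of $w$.
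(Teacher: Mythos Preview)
Your proposal is correct and follows essentially the same route as the paper: test \eqref{eq_w} against $w^{t+1}\eta^s$, integrate by parts, absorb the cross term via Young to reach $(\ast)$, then in the range $-2<t<-1$ use Young/H\"older with exponents $(p+t)/(t+2)$ and $(p+t)/(p-2)$ to absorb $w^{p+t}$ back, while for $t\leq-2$ invoke the lower bound $w\geq Ar^{-(n-2)}$ from Lemma~\ref{lb}. The only difference is that you are more explicit than the paper about handling the inner endpoint $r\to 0$ via a two-sided cutoff; the paper simply takes $\eta\equiv 1$ on $[0,R]$ without comment.
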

\begin{proof} Let $t<-1$, $s>\max\{2,\frac{2(p+t)}{p-2}\}$, $R>0$ and choose a standard smooth cutoff function $\eta=\eta(r)$ with 
$$
\eta\equiv 1 \, \text{ in $[0,R]$} \,, \quad  \eta\equiv 0 \, \text{ in $[2R,+\infty)$}\, , \quad \text{ and } \quad |\eta'|\leq \frac{C}{R} \, \text{ in $[R,2R]$}\, 
$$
for some $C>0$. Multiplying equation \eqref{eq_w} by $w^{1+t}\eta^s$ and integrating by parts we obtain
\begin{align*}
\int_{(0,+\infty)\times\mathbb{S}^{d-1}}w^{p+t}\eta^s\,d\mu =&-\int_{(0,+\infty)\times\mathbb{S}^{d-1}}w^{1+t}\mathcal{L}w\,\eta^s\,d\mu\\
=&(1+t)\int_{(0,+\infty)\times\mathbb{S}^{d-1}}w^t|\D w|^2\eta^s\,d\mu+s\int_{(0,+\infty)\times\mathbb{S}^{d-1}}w^{1+t}\langle \D w,\D\eta\rangle \eta^{s-1}\,d\mu\,.
\end{align*}
By Cauchy-Schwarz and Young inequalities, we get
$$
w^{1+t}\langle \D w,\D\eta\rangle \eta^{s-1}\leq \eps w^t |\D w|^2\eta^s+C_\eps w^{2+t}|\D\eta|^2\eta^{s-2}\,, 
$$
for every $\varepsilon>0$ and for some $C_\varepsilon>0$. Since $t<-1$ and $s\geq 2$, choosing $\eps>0$ small enough, we obtain
\begin{equation}\label{est_pre_0}
\int_{(0,+\infty)\times\mathbb{S}^{d-1}}w^{p+t}\eta^s\,d\mu+\int_{(0,+\infty)\times\mathbb{S}^{d-1}}w^t|\D w|^2\eta^s\,d\mu \leq C \int_{(0,+\infty)\times\mathbb{S}^{d-1}}w^{2+t}|\D\eta|^2\eta^{s-2}\,d\mu\,.
\end{equation}
We have to consider the two cases: $-2<t<-1$ and $t\leq -2$.

If $-2<t<-1$, then $\frac{p+t}{2+t}>1$ and, by the generalized Young inequality,
we have
$$
w^{2+t}|\D\eta|^2\eta^{s-2} \leq \eps w^{p+t}\eta^s+C_\eps |\D\eta|^{\frac{2(p+t)}{p-2}}\eta^{s-\frac{2(p+t)}{p-2}}\,,
$$
for every $\varepsilon>0$ and for some $C_\varepsilon>0$. Therefore, since $s\geq \frac{2(p+t)}{p-2}$, choosing $\eps>0$ small enough, we get
\begin{align*}
\int_{(0,R)\times\mathbb{S}^{d-1}}w^{p+t}\,d\mu+\int_{(0,R)\times\mathbb{S}^{d-1}}w^t|\D w|^2\,d\mu \leq& C \int_{(R,2R)\times\mathbb{S}^{d-1}}|\D\eta|^{\frac{2(p+t)}{p-2}}\,d\mu\\
\leq& C R^{n-\frac{2(p+t)}{p-2}}\\
=&CR^{-\frac{n-2}{2}t}\,,
\end{align*}
where we used \eqref{p_v2}.

If $t\leq -2$, we use the lower bound in Lemma \ref{lb} with $f=w$, in \eqref{est_pre_0} to obtain 
\begin{align*}
\int_{(0,R)\times\mathbb{S}^{d-1}}w^{p+t}\,d\mu+\int_{(0,R)\times\mathbb{S}^{d-1}}w^t|\D w|^2\,d\mu \leq& C R^{-(n-2)(2+t)}\int_{(R,2R)\times\mathbb{S}^{d-1}}|\D\eta|^2\,d\mu\\
\leq& C R^{-(n-2)(2+t)-2+n}\\ 
\leq& C R^{-(n-2)(1+t)}\, ,
\end{align*}
and in both cases the claim follows.

\end{proof}

\

\section{Proofs of the results}

Before presenting the proofs of our main results, we observe that any (positive) solution $u$ of \eqref{CKN-eq} satisfies 
$$
u\in C^{\infty}(\mathbb{R}^d\setminus\{0\})\cap L^{\infty}_{\mathrm{loc}}(\mathbb{R}^d)\, , 
$$
this implies that both $w$, given by \eqref{def_w}, and $\p$, given by \eqref{def_p}, satisfy 
$$
w,\p\in C^{\infty}((0,+\infty)\times\mathbb{S}^{d-1})\cap L^{\infty}_{\mathrm{loc}}([0,+\infty)\times\mathbb{S}^{d-1})\, . 
$$
Moreover, if $u\in\mathcal{D}^{a,b}(\mathbb{R}^d)$ then 
\begin{equation}\label{en_w}
\left(\int_{(0,+\infty)\times\mathbb{S}^{d-1}}|w|^p\, d\mu\right)^{\frac{1}{p}}<+\infty\quad \text{ and } \quad \left(\int_{(0,+\infty)\times\mathbb{S}^{d-1}} |\D w|^2\, d\mu\right)^{\frac{1}{2}}<+\infty\, . 
\end{equation}

\subsection{Proof of Theorem \ref{teo1}}

Assume that $n<4$, then from Lemma \ref{int_ineq} we have 
\begin{equation}\label{est-1}
0\leq \int_{(0,+\infty)\times\mathbb{S}^{d-1}}\p^{1-n}\k[\p]\eta^s\, d\mu\leq C\int_{(0,+\infty)\times\mathbb{S}^{d-1}}\p^{1-n}|\D\p|^2|\eta'|^2\, d\mu\, 
\end{equation}
for every $s\geq 2$, $\eta=\eta(r)\in C^{\infty}_c(0,+\infty)$, for some $C>0$. Let $R>0$, we choose a standard smooth cutoff function $\eta=\eta(r)$ with $$
\eta\equiv 1 \, \text{ in $[0,R]$} \,, \quad  \eta\equiv 0 \, \text{ in $[2R,+\infty)$}\, , \quad \text{ and } \quad |\eta'|\leq \frac{C}{R} \, \text{ in $[R,2R]$}\, 
$$
for some $C>0$. To control the right-hand side of \eqref{est-1} we integrate by parts to obtain
\begin{align*}
\int_{(0,+\infty)\times\mathbb{S}^{d-1}}\p^{1-n}|\D\p|^2\eta^s\, d\mu=&\frac{1}{2-n}\int_{(0,+\infty)\times\mathbb{S}^{d-1}} \langle\D(\p^{2-n}),\D\p\rangle\eta^s\,d\mu \\
=& \frac{1}{n-2}\int_{(0,+\infty)\times\mathbb{S}^{d-1}}\p^{2-n} \mathcal{L}\p\,\eta^s\,d\mu\\
&+ \frac{s}{n-2}\int_{(0,+\infty)\times\mathbb{S}^{d-1}}\p^{2-n}\langle\D\p,\D\eta\rangle\eta^{s-1}\,d\mu \\
=& \frac{2(n-1)^2}{(n-2)^2}\int_{(0,+\infty)\times\mathbb{S}^{d-1}} \p^{1-n}\eta^s\d\mu \\
&+\frac{n}{2(n-2)}\int_{(0,+\infty)\times\mathbb{S}^{d-1}} \p^{1-n}|\D\p|^2\eta^s \\
&+ \frac{s}{n-2}\int_{(0,+\infty)\times\mathbb{S}^{d-1}}\p^{2-n}\langle\D\p,\D\eta\rangle\eta^{s-1}\,d\mu \, , 
\end{align*}
where we used \eqref{eq_p}. Hence 
\begin{align*}
\frac{4-n}{2(n-2)}\int_{(0,+\infty)\times\mathbb{S}^{d-1}}\p^{1-n}|\D\p|^2\eta^s\, d\mu=& - \frac{2(n-1)^2}{(n-2)^2}\int_{(0,+\infty)\times\mathbb{S}^{d-1}} \p^{1-n}\eta^s\d\mu\nonumber  \\
&- \frac{s}{n-2}\int_{(0,+\infty)\times\mathbb{S}^{d-1}}\p^{2-n}\langle\D\p,\D\eta\rangle\eta^{s-1}\,d\mu\, . 
\end{align*}
From Cauchy-Schwarz and Young inequalities we get 
$$
-\frac{s}{n-2}\p^{2-n}\langle\D\p,\D\eta\rangle\eta^{s-1}\leq \p^{2-n}|\D\p||\D\eta|\eta^{s-1} \leq \varepsilon\p^{1-n}|\D\p|^2\eta^s + C_\varepsilon \p^{3-n}|\D\eta|^2\eta^{s-2}\, , 
$$
for every $\varepsilon>0$ and for some $C_\varepsilon>0$. Therefore, since $n<4$, we can choose $\eps>0$ small enough to obtain
$$
\int_{(0,+\infty)\times\mathbb{S}^{d-1}}\p^{1-n}|\D\p|^2\eta^s\, d\mu\leq - C_1\int_{(0,+\infty)\times\mathbb{S}^{d-1}} \p^{1-n}\eta^s\d\mu+C_2\int_{(0,+\infty)\times\mathbb{S}^{d-1}}\p^{3-n}|\D\eta|^2\eta^{s-2}\,d\mu\, . 
$$
for some $C_1,C_2>0$. Let $s>n-1$. By the generalized Young inequality, we have
$$
\p^{3-n}|\D\eta|^2\eta^{s-2} \leq \eps \p^{1-n}\eta^{s}+C_\eps |\D\eta|^{n-1}\eta^{s-n+1}\,,
$$
for every $\varepsilon>0$ and for some $C_\varepsilon>0$. Choosing again $\eps>0$ small enough, we get
\begin{align*}
\int_{(0,+\infty)\times\mathbb{S}^{d-1}}\p^{1-n}|\D\p|^2\eta^s\, d\mu\leq& C\int_{(0,+\infty)\times\mathbb{S}^{d-1}}|\D\eta|^{n-1}\eta^{s-n+1}\,d\mu\\
\leq& C R^{1-n}\int_{R}^{2R}\int_{\mathbb{S}^{d-1}} r^{n-1}\,dr\,d\theta \\
\leq& C R\,.
\end{align*}
In particular, for every $R>0$
$$
\int_{(0,R)\times\mathbb{S}^{d-1}}\p^{1-n}|\D\p|^2\, d\mu \leq C R
$$
for some $C>0$. From \eqref{est-1}, we deduce
\begin{align*}
0\leq \int_{(0,2R)\times\mathbb{S}^{d-1}}\p^{1-n}\k[\p]\, d\mu=&\int_{(0,+\infty)\times\mathbb{S}^{d-1}}\p^{1-n}\k[\p]\eta^s\, d\mu\\
\leq&CR^{-2}\int_{(0,2R)\times\mathbb{S}^{d-1}}\p^{1-n}|\D\p|^2\, d\mu \\
\leq& C R^{-1}\,.
\end{align*}
Taking the limit as $R\to\infty$ we get
$$
\int_{(0,+\infty)\times\mathbb{S}^{d-1}}\p^{1-n}\k[\p]\, d\mu =0.
$$
From Proposition \ref{prop-del} we obtain
$$
u(x)=\mathcal{U}(x)\, ,
$$
up to scaling, where $\mathcal{U}$ is given by \eqref{Tal_CKN} and the conclusion of Theorem \ref{teo1} follows.

\

\subsection{Proof of Theorem \ref{teo3}}

Let $n\geq 4$ and assume
$$
u(x)\leq C|x|^{\sigma\alpha}\quad\text{for some }\sigma<-\frac{(n-2)(n-6)}{2(n-4)}\,,
$$
outside a compact set. Equivalently, for $r\geq\rho>0$, 
\begin{equation}\label{ubw}
w(r,\theta)\leq Cr^{\sigma}\quad\text{for some }\sigma<-\frac{(n-2)(n-6)}{2(n-4)}\,.
\end{equation}
From Lemma \ref{int_ineq} we have 
\begin{equation}\label{est-3}
0\leq \int_{(0,+\infty)\times\mathbb{S}^{d-1}}\p^{1-n}\k[\p]\eta^s\, d\mu\leq C\int_{(0,+\infty)\times\mathbb{S}^{d-1}}\p^{1-n}|\D\p|^2|\eta'|^2\, d\mu\, 
\end{equation}
for every $s\geq 2$, $\eta=\eta(r)\in C^{\infty}_c(0,+\infty)$, for some $C>0$. Let $R>0$, we choose again a standard smooth cutoff function $\eta=\eta(r)$ with 
$$
\eta\equiv 1 \, \text{ in $[0,R]$} \,, \quad  \eta\equiv 0 \, \text{ in $[2R,+\infty)$}\, , \quad \text{ and } \quad |\eta'|\leq \frac{C}{R} \, \text{ in $[R,2R]$}\, 
$$
for some $C>0$. Therefore
$$
0\leq\int_{(0,2R)\times\mathbb{S}^{d-1}}\p^{1-n}\k[\p]\, d\mu\leq C R^{-2}\int_{(R,2R)\times\mathbb{S}^{d-1}}w^{-\frac{2}{n-2}}|\D w|^2\, d\mu
$$
Let $\gamma> \frac{n-4}{n-2}$. To control the right-hand side we use the integral estimates in Lemma \ref{estgen} with 
$$
t=-\frac{2}{n-2}-\gamma<-1
$$
which yields
\begin{align}\label{ee}\nonumber
R^{-2}\int_{(R,2R)\times\mathbb{S}^{d-1}}w^{-\frac{2}{n-2}}|\D w|^2\, d\mu\leq&R^{-2} \sup_{(R,2R)\times\mathbb{S}^{d-1}} w^\gamma \int_{(R,2R)\times\mathbb{S}^{d-1}}w^{-\frac{2}{n-2}-\gamma}|\D w|^2\, d\mu\\
\leq& CR^{\beta-2}\sup_{(R,2R)\times\mathbb{S}^{d-1}} w^\gamma\,.
\end{align}
If $\frac{n-4}{n-2}<\gamma< \frac{2(n-3)}{n-2}$, i.e. $t>-2$, then the exponent $\beta$ given in Lemma \ref{estgen} is $\beta=-\frac{n-2}{2}t=1+\frac{n-2}{2}\gamma$ and
$$
R^{-2}\int_{(R,2R)\times\mathbb{S}^{d-1}}w^{-\frac{2}{n-2}}|\D w|^2\, d\mu\leq C R^{-1+\frac{n-2}{2}\gamma+\sigma\gamma}\,.
$$
Choosing $\frac{n-4}{n-2}<\gamma< \frac{2(n-3)}{n-2}$ sufficiently close to $\frac{n-4}{n-2}$ we have that the exponent of $R$ is negative, so the left-hand side tends to zero as $R\to\infty$. Using \eqref{est-3}, we get 
$$
\int_{(0,+\infty)\times\mathbb{S}^{d-1}}\p^{1-n}\k[\p]\, d\mu =0.
$$
From Proposition \ref{prop-del} we obtain
$$
u(x)=\mathcal{U}(x)\, ,
$$
up to scaling, where $\mathcal{U}$ is given by \eqref{Tal_CKN} and the conclusion of Theorem \ref{teo3} follows.

\begin{remark} It is easy to see that, choosing $\gamma\geq \frac{2(n-3)}{n-2}$ in \eqref{ee}, one can obtain similar but worst estimates.
\end{remark}

\

\subsection{Proof of Corollaries \ref{cor1} and \ref{cor2}} If $n<4$ the results follows immediately from Theorem \ref{teo1}. If $n\geq 4$, we apply Theorem \ref{teo3} with 
$$
\sigma\alpha:=-\frac{d-2-2a}{2}<-\frac{(n-2)(n-6)}{2(n-4)}\alpha=-\frac{d-2-2a}{2}\left(1-\frac{2}{n-4}\right)
$$
or 
$$
\sigma\alpha:=0,
$$
to prove Corollaries \ref{cor1} and \ref{cor2}, respectively.

\

\subsection{Proof of Theorem \ref{teo2}}

From Lemma \ref{int_ineq} we have 
\begin{equation}\label{est-2}
0\leq \int_{(0,+\infty)\times\mathbb{S}^{d-1}}\p^{1-n}\k[\p]\eta^s\, d\mu\leq C\int_{(0,+\infty)\times\mathbb{S}^{d-1}}\p^{1-n}|\D\p|^2|\eta'|^2\, d\mu\, 
\end{equation}
for every $s\geq 2$, $\eta=\eta(r)\in C^{\infty}_c(0,+\infty)$, for some $C>0$. Let $R>0$, we choose again a standard smooth cutoff function $\eta=\eta(r)$ with 
$$
\eta\equiv 1 \, \text{ in $[0,R]$} \,, \quad  \eta\equiv 0 \, \text{ in $[2R,+\infty)$}\, , \quad \text{ and } \quad |\eta'|\leq \frac{C}{R} \, \text{ in $[R,2R]$}\, 
$$
for some $C>0$. To control the right-hand side of \eqref{est-2} we use the lower bound in Lemma \ref{lb} with $f=w$ which yields
$$
w(r,\theta)\geq \frac{A}{r^{n-2}}\quad\text{for all } (r,\theta)\in(\rho,+\infty)\times\mathbb{S}^{d-1}\, ,
$$ 
for some $\rho>0$ and $A>0$. Thus, from the definition of $\p$ in \eqref{def_p}, we deduce 
\begin{align*}
\int_{(0,+\infty)\times\mathbb{S}^{d-1}}\p^{1-n}|\D\p|^2|\eta'|^2\, d\mu\leq&CR^{-2}\int_{(R,2R)\times\mathbb{S}^{d-1}}\p^{1-n}|\D\p|^2\, d\mu\\
=&C' R^{-2}\int_{(R,2R)\times\mathbb{S}^{d-1}}w^{-\frac{2}{n-2}}|\D w|^2\, d\mu\\
\leq& C\int_{(R,2R)\times\mathbb{S}^{d-1}}|\D w|^2\, d\mu\,.
\end{align*}
Since $u\in \mathcal{D}^{a,b}(\RR^d)$, then from \eqref{en_w} we have 
$$
\int_{(0,+\infty)\times\mathbb{S}^{d-1}}|\D w|^2\, d\mu<+\infty\,.
$$
Therefore, taking the limit $R\to\infty$ and using \eqref{est-2}, we get 
$$
\int_{(0,+\infty)\times\mathbb{S}^{d-1}}\p^{1-n}\k[\p]\, d\mu =0.
$$
From Proposition \ref{prop-del} we obtain
$$
u(x)=\mathcal{U}(x)\, ,
$$
up to scaling, where $\mathcal{U}$ is given by \eqref{Tal_CKN} and the conclusion of Theorem \ref{teo2} follows.

\

\

\begin{ackn}
\noindent The first and third authors are member of the {\em GNSAGA, Gruppo Nazionale per le Strutture Algebriche, Geometriche e le loro Applicazioni} of INdAM. The second author is member of {\em GNAMPA, Gruppo Nazionale per l'Analisi Matematica, la Probabilit\`a e le loro Applicazioni} of INdAM. Moreover, the first and the third authors are partially supported by the project PRIN 2022 ``Differential-geometric aspects of manifolds via Global Analysis'', the second author is partially supported by the project PRIN 2022 ``Geometric-Analytic Methods for PDEs and Applications''.
\end{ackn}

\

\

\noindent{\bf Data availability statement}

\noindent Data sharing not applicable to this article as no datasets were generated or analysed during the current study.

\

\

\

\

\end{document}